\newcommand{\defi}[1]{\textsf{\color{blue}#1}} 
\newcommand{\newproblem}[4]{
\theoremstyle{definition}
\newtheorem*{#1}{#2}
\begin{#1}
\hfill
\begin{description}
\item[input] #3
\item[question] #4
\end{description}
\end{#1}
}
\newcommand{\Aff}{\mathbb{A}}
\newcommand{\C}{\mathbb{C}}
\newcommand{\F}{\mathbb{F}}
\newcommand{\N}{\mathbb{N}}
\newcommand{\PP}{\mathbb{P}}
\newcommand{\Q}{\mathbb{Q}}
\newcommand{\R}{\mathbb{R}}
\newcommand{\Z}{\mathbb{Z}}
\newcommand{\Qbar}{{\overline{\Q}}}
\newcommand{\Fbar}{{\overline{\F}}}
\newcommand{\calA}{\mathcal{A}}
\newcommand{\calF}{\mathcal{F}}
\newcommand{\calM}{\mathcal{M}}
\newcommand{\calO}{\mathcal{O}}
\newcommand{\EE}{\mathscr{E}}
\newcommand{\tH}{{\operatorname{th}}}
\newcommand{\PGL}{\operatorname{PGL}}
\newcommand{\intersect}{\cap} 
\newcommand{\isom}{\simeq}
\newcommand{\union}{\cup} 
\newcommand{\Union}{\bigcup} 
\newtheorem*{Higman}{Higman embedding theorem}
\newtheorem{theorem}{Theorem}[section]
\newtheorem{corollary}[theorem]{Corollary}
\theoremstyle{definition}
\newtheorem{question}[theorem]{Question}
\newtheorem{example}[theorem]{Example}
\theoremstyle{remark}
\newtheorem{remark}[theorem]{Remark}
\begin{document}

\title{Undecidable problems: a sampler}
\subjclass[2010]{Primary 03D35; Secondary 00A05}
\keywords{Undecidability, decision problem}
\author{Bjorn Poonen}
\thanks{The writing of this article was supported by the Guggenheim Foundation and National Science Foundation grants DMS-0841321 and DMS-1069236.  The final version of this survey is published as B.~Poonen, ``Undecidable problems: a sampler'' in J.~Kennedy~(ed.),
\emph{Interpreting G\"odel: Critical essays} (Cambridge University Press, 2014), 
pp.~211--241, \url{http://www.cambridge.org/gb/academic/subjects/philosophy/philosophy-science/interpreting-godel-critical-essays?format=HB}\phantom{i}.}
\address{Department of Mathematics, Massachusetts Institute of Technology, Cambridge, MA 02139-4307, USA}
\email{poonen@math.mit.edu}
\urladdr{\url{http://math.mit.edu/~poonen/}}
\date{May 28, 2012; typos corrected June 25, 2014; reference to published version added October 25, 2014.}

\begin{abstract}
After discussing two senses in which the notion of undecidability
is used, we present a survey of undecidable decision problems 
arising in various branches of mathematics.
\end{abstract}

\maketitle


\section{Introduction}\label{S:intro}

The goal of this survey article is to demonstrate that 
undecidable decision problems arise naturally in many branches of mathematics.
The criterion for selection of a problem in this survey is simply that 
the author finds it entertaining!
We do not pretend that our list of undecidable problems 
is complete in any sense.
And some of the problems we consider turn out to be decidable
or to have unknown decidability status.
For another survey of undecidable problems, see~\cite{Davis1977}.

\section{Two notions of undecidability}\label{S:undecidability}

There are two common settings in which one speaks of undecidability:
\begin{description}
\item[1. Independence from axioms] A single statement is called \defi{undecidable}
if neither it nor its negation can be deduced using the rules of logic
from the set of axioms being used.
(Example: The \defi{continuum hypothesis}, that there is no cardinal number
strictly between $\aleph_0$ and $2^{\aleph_0}$,
is undecidable in the ZFC axiom system,
assuming that ZFC itself is consistent~\cites{Godel1940,Cohen1963,Cohen1964}.)
The first examples of statements independent of a ``natural'' axiom system
were constructed by K.~G\"odel~\cite{Godel1931}.
\item[2. Decision problem] A family of problems with YES/NO answers
is called \defi{undecidable}
if there is no algorithm that terminates with the correct answer
for every problem in the family.
(Example: \defi{Hilbert's tenth problem}, to decide whether a multivariable
polynomial equation with integer coefficients has a solution in integers,
is undecidable~\cite{Matiyasevich1970}.)
\end{description}

\begin{remark}
In modern literature, the word ``undecidability'' is used more commonly
in sense~2, given that ``independence'' adequately describes sense~1.
\end{remark}

To make 2 precise, one needs a formal notion of algorithm.
Such notions were introduced by 
A.~Church~\cite{Church1936a} and A.~Turing~\cite{Turing1936} 
independently in the 1930s.
{}From now on, we interpret algorithm to mean \defi{Turing machine}, 
which, loosely speaking, means that it is a 
computer program that takes as input a finite string of 0s and 1s.
The role of the finite string 
is to specify which problem in the family is to be solved.

\begin{remark}
Often in describing a family of problems, 
it is more convenient to use higher-level mathematical objects 
such as polynomials or finite simplicial complexes as input.
This is acceptable if these objects can be encoded
as finite binary strings.  It is not necessary to specify 
the encoding as long as it is clear that a Turing machine
could convert between reasonable encodings imagined by
two different readers.
\end{remark}

\begin{remark}
One cannot speak of a \emph{single} YES/NO question 
being undecidable in sense 2, 
because there exists an algorithm that outputs the correct answer for it,
even if one might not know \emph{which} algorithm it is!
\end{remark}

There is a connection between the two notions of undecidability.
Fix a decision problem and an axiom system $\calA$ such that
\begin{enumerate}[\upshape (a)]
\item \label{I:generating A}
there is a computer program that generates exactly the axioms of $\calA$; and
\item \label{I:generating Y_i} 
there is a computer program that, when fed an instance $i$
of the decision problem, outputs a statement $Y_i$
in the language of $\calA$ such that
\begin{itemize}
\item if $Y_i$ is provable in $\calA$, 
then the answer to $i$ is YES, and
\item if $\lnot Y_i$ is provable in $\calA$, 
then the answer to $i$ is NO.
\end{itemize}
\end{enumerate}
Under these assumptions,
if the decision problem is undecidable in sense 2,
then at least one of its instance statements $Y_i$ 
is undecidable in sense 1, i.e., independent of $\calA$.
The proof of this is easy: 
if every $Y_i$ could be proved or disproved in $\calA$,
then the decision problem could be solved by a computer program that 
generates all theorems deducible from $\calA$ 
until it finds either $Y_i$ or $\lnot Y_i$.
In fact, under the same assumptions,
there must be \emph{infinitely many} $Y_i$ that are independent of $\calA$, 
since if there were only finitely many,
there would exist a decision algorithm that handled them as special cases.

\begin{remark}
In all the undecidable decision problems we present, 
the source of the undecidability can be traced back to 
a single undecidable decision problem, 
namely the halting problem, or equivalently
the membership problem for listable sets 
(see Sections \ref{S:halting problem} and~\ref{S:listable}).
For any of these problems, 
in principle we can compute a \emph{specific} $i$ for which $Y_i$ 
is independent of $\calA$
(cf.\ the last paragraph of page~294 of~\cite{Post1944}).
The value of $i$ depends on $\calA$;
more precisely, 
$i$ can be computed in terms of 
the programs in \eqref{I:generating A} and~\eqref{I:generating Y_i}.
\end{remark}

\begin{example}
Assume that ZFC is consistent, 
and, moreover, that theorems in ZFC about integers are true.
Then, because the undecidability of 
Hilbert's tenth problem in sense~2 
is proved via the halting problem (see Section~\ref{S:H10}), 
there is a specific polynomial $f \in \Z[x_1,\ldots,x_n]$
one could write down in principle such that neither
\begin{equation}
\label{E:unprovable}
	(\exists x_1,\ldots,x_n \in \Z) \; f(x_1,\ldots,x_n) = 0
\end{equation}
nor its negation can be proved in ZFC.
Moreover, \eqref{E:unprovable} must be false,
because if it were true, it could be proved in ZFC
by exhibiting a single $(x_1,\ldots,x_n) \in \Z^n$ 
satisfying $f(x_1,\ldots,x_n)=0$.
(It might seem as if this is a ZFC proof of the negation 
of~\eqref{E:unprovable}, but in fact it is only a ZFC proof 
of the \emph{implication}
\begin{quote}
   ``If ZFC is consistent and proves only true theorems
   about integers, then the negation of \eqref{E:unprovable} holds.''
\end{quote}
This observation is related to 
G\"odel's second incompleteness theorem, which implies that ZFC
cannot prove the hypothesis of the implication unless ZFC is inconsistent!)
\end{example}

\section{Logic}

G\"odel's incompleteness theorems~\cite{Godel1931}
provided undecidable statements in sense~1
for a wide variety of axiom systems.
Inspired by this,
Church and Turing began to prove that certain decision problems 
were undecidable in sense~2,
as soon as they developed their notions of algorithm.

\subsection{The halting problem}
\label{S:halting problem}

The \defi{halting problem} asks whether it is possible write a debugger
that takes as input a computer program and decides whether it eventually halts
instead of entering an infinite loop.
For convenience, let us assume that each program accepts a natural
number as input:

\newproblem{haltingpp}{Halting problem}
{a program $p$ and a natural number $x$}
{Does $p$ eventually halt when run on input $x$?}
 
\begin{theorem}[Turing~\cite{Turing1936}]
The halting problem is undecidable.
\end{theorem}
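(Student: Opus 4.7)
The plan is to argue by contradiction, using a diagonalization trick in the spirit of Cantor and G\"odel. I would suppose, for contradiction, that there is a Turing machine $H$ which, on input an encoding of a program $p$ and a natural number $x$, always halts and outputs YES if $p$ halts on input $x$ and NO otherwise. My goal is to use $H$ to build a program whose behavior on itself is self-contradictory.

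First I would fix, once and for all, a computable bijection between programs and natural numbers, so that $H$ can be regarded as a total computable function of two natural numbers. Then I would construct a new program $D$ with the following behavior on input $p$: simulate $H(p,p)$; if the answer is YES, deliberately enter an infinite loop, and if the answer is NO, halt. Because $H$ is assumed to be a total Turing machine and the additional bookkeeping is obviously computable, $D$ itself is a bona fide Turing machine; let $d$ denote its code.

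The decisive step is to run $D$ on its own code $d$. By construction, $D$ halts on input $d$ if and only if $H(d,d)$ outputs NO, which by the defining property of $H$ happens if and only if $D$ does \emph{not} halt on input $d$. This is a direct contradiction, so no such $H$ can exist.

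I do not expect any real mathematical obstacle once the setup is in place; the only delicate point, and the one I would be careful about, is the conceptual distinction between a program as a syntactic object (a finite binary string) and as the partial function it computes, so that the self-application $D(d)$ is unambiguous. Once this is handled by the fixed encoding and by tacit appeal to a universal Turing machine that can simulate any given program on any given input, the diagonal argument closes with no further calculation.
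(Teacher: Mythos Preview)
Your argument is correct and is essentially the same diagonalization as in the paper's sketch: assume a decider, build a self-referential program that halts on its own code if and only if it does not, and derive a contradiction. The only cosmetic difference is naming (the paper calls the diagonal machine $H$ rather than the hypothetical decider), and both you and the paper note that the honest gap to fill is the appeal to a universal Turing machine.
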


\begin{proof}[Sketch of proof]
We will use an encoding of programs as natural numbers,
and identify programs with numbers.
Suppose that there were an algorithm for deciding 
when program $p$ halts on input $x$.
Using this, we could write a new program $H$ such that
\[
	\textup{$H$ halts on input $x$} 
	\quad \iff \quad
	\textup{program $x$ does not halt on input $x$.} 
\]
Taking $x=H$, we find a contradiction: $H$ halts on input $H$
if and only if $H$ does not halt on input $H$.
\end{proof}

To turn the sketch above into a complete proof would require some programming,
to show that there is a ``universal'' computer program 
that can simulate any other program given its number;
this could then be used to construct $H$.

\subsection{Listable sets}
\label{S:listable}

Let $\N$ be the set of natural numbers.
Let $A$ be a subset of $\N$.
Call $A$ \defi{computable}\footnote{In most twentieth century literature in the subject one finds the terms \defi{recursive} and \defi{recursively enumerable (r.e.)}.  But R.~Soare~\cite{Soare1996} has argued in favor of the use of the terms ``computable'' and ``c.e.''\ instead, and many researchers in the field have followed his recommendation.}
if there is an algorithm that takes an input an element $n \in \N$
and decides whether or not $n \in A$.
On the other hand, call $A$ \defi{listable}
or \defi{computably enumerable (c.e.)}\ 
if there is a computer program that when left running forever
eventually prints out exactly the elements of $A$.
Computable sets are listable.

For each listable set $A$, we then have the following decision problem:

\newproblem{membershippp}{Membership in a listable set $A$}
{$n \in \N$}
{Is $n \in A$?}

\begin{theorem}[\cites{Church1936a,Rosser1936,Kleene1936}]
There exists a listable set $A$ for which the membership problem
is undecidable.
\end{theorem}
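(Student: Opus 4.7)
The plan is to reduce the undecidability of the halting problem (already proved) to the membership problem for a carefully chosen listable set. The natural candidate is the ``self-halting'' set
\[
A \colonequals \{\, p \in \N : \text{program } p \text{ halts on input } p \,\},
\]
where we use the same encoding of programs as natural numbers as in the halting-problem proof. So the whole argument has two parts: (1) check that $A$ is listable, and (2) check that if membership in $A$ were decidable, then the halting problem would be decidable.

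For listability, I would enumerate $A$ by dovetailing. Run a loop over $N = 1,2,3,\ldots$; for each $N$, and for each pair $(p,t)$ with $p \le N$ and $t \le N$, simulate $t$ steps of program $p$ on input $p$ using a universal machine, and print $p$ the first time such a simulation is observed to halt (keeping a record to avoid duplicate output). Any $p$ that halts on $p$ eventually gets printed, and conversely only such $p$ are printed; so this program lists exactly $A$.

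For undecidability, suppose we had an algorithm $M_A$ deciding membership in $A$. Then to solve an arbitrary instance $(p,x)$ of the halting problem, I would first write a program $q_{p,x}$ that ignores its input and simulates $p$ on $x$; the map $(p,x) \mapsto q_{p,x}$ is clearly computable. Then $q_{p,x}$ halts on input $q_{p,x}$ iff $p$ halts on $x$, so applying $M_A$ to $q_{p,x}$ decides the halting problem, contradicting its undecidability. (Alternatively, one can reduce the self-halting problem directly: the proof sketch of the halting theorem actually shows that the subproblem ``does program $x$ halt on input $x$?'' is already undecidable, which is precisely membership in $A$.)

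The step requiring the most care is the same one flagged in the previous proof: the existence of a universal program that can simulate arbitrary $(p,x)$ for a bounded number of steps, so that the dovetailing in part (1) and the reduction in part (2) are genuinely algorithmic. Once that piece of standard computability theory is in hand, both parts are short.
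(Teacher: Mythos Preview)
Your proof is correct and follows essentially the same approach as the paper: pick a halting-type set, show it is listable by dovetailing simulations of programs for bounded numbers of steps, and deduce undecidability from the halting problem. The only cosmetic difference is that the paper takes $A$ to be ``the set of numbers of programs that halt'' without specifying the input, whereas you explicitly work with the self-halting set $\{p : p \text{ halts on input } p\}$ and spell out the reduction via the auxiliary program $q_{p,x}$; these are minor variations on the same argument.
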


\begin{proof}
Let $A$ be the set of numbers of programs that halt.
Then $A$ is listable (write a program that during iteration $N$
runs each of the first $N$ programs for $N$ steps,
and prints the numbers of those that have already halted).
But the undecidability of the halting problem implies 
that $A$ is not computable; in other words,
the membership problem for $A$ is undecidable.
\end{proof}

It would be just as easy to argue in reverse,
to use the existence of a non-computable listable set
to prove the undecidability of the halting problem.

\subsection{The Entscheidungsproblem}

Fix a finite set of axioms.
Then there are some (first-order) statements that are \defi{universally valid},
meaning that they are true for every mathematical structure satisfying
the axioms.
By the completeness theorem of first-order logic~\cite{Godel1930},
the universally valid statements are exactly the ones that 
are \defi{provable} in the sense that they can be deduced from the axioms
using the rules of logic.

Can one decide in a finite amount of time
whether or not any given statement is universally valid?
This is the \defi{Entscheidungsproblem}, 
proposed by D.~Hilbert~\cite{Hilbert-Ackermann1928}*{Chapter~3, \S11}.
(Entscheidung is the German word for ``decision''.)
One could try searching for a proof by day
and searching for a proof of the negation by night,
but such an algorithm might fail to terminate for some input statements 
since it could be that neither proof exists.

More formally, but still without providing full definitions,
given a first-order logic $\calF$, possibly including a finite number of
special axioms beyond the basic axioms of first-order logic,
one has the following decision problem:

\newproblem{Entscheidugspp}{Entscheidungsproblem for $\calF$}
{a first-order sentence $s$ in the language of $\calF$}
{Is $s$ true in every model of the axioms of $\calF$?}

It was known to Hilbert that 
there is a single first-order logic $\calF_0$ without special axioms 
such that if the Entscheidungsproblem for $\calF_0$ is decidable,
so is the Entscheidungsproblem for any other first-order logic.
But Church~\cites{Church1936a,Church1936b} 
and Turing~\cite{Turing1936}*{\S11} 
independently proved that the Entscheidungsproblem for $\calF_0$
was undecidable.
For more information, see~\cite{Davis1958}*{Chapter~8,~\S4}.

\section{Combinatorics}

\subsection{The Post correspondence problem}

Imagine a rectangular block with a finite string of $a$'s and $b$'s
written along the top and another such string written along the bottom,
both upright.
When finitely many such blocks are laid side to side,
the strings along the top may be concatenated,
and the strings along the bottom may be concatenated.
E.~Post~\cite{Post1946} proved that the following 
simple-sounding problem is undecidable.

\newproblem{pcp}{Post correspondence problem}
{a finite collection of blocks, labelled as above}
{Given an unlimited supply of copies of these particular blocks, can one form a nonempty finite sequence of them for which the concatenation of the top strings equals the concatenation of the bottom strings?}

The reason that it is undecidable is that 
one can embed the halting problem in it.
Namely, with some work it is possible, given a computer program $p$,
to construct an instance of Post correspondence problem
that has a positive answer if and only if $p$ halts.

Because of its simplicity, the Post correspondence problem
is often used to prove the undecidability of other problems,
for instance, in the formal theory of languages: see~\cite{Davis1977}.

\subsection{Tiling the plane}

\defi{Wang tiles}, 
introduced by H. Wang~\cite{Wang1961}*{\S4.1},
are unit squares in the plane, with sides parallel to the axes, 
such that each side of each square has been assigned a color.
Figure~\ref{F:Wang tiles} shows a collection of $13$ such tiles.
They may be translated, but not rotated or reflected.
A tiling of the plane into such squares is valid 
if whenever two squares share an edge, the colors match,
as in the game of dominoes.
Wang proposed the following problem:

\newproblem{Wangpp}{Tiling problem}
{a finite collection of Wang tiles}
{Is there a valid tiling of the entire plane using only translated copies of the given tiles?}

Wang also conjectured~\cite{Wang1961}*{4.1.2} 
that if a tiling exists for a given finite collection,
then there exists a \emph{periodic} tiling, i.e., one that is invariant under
translations by the vectors in a finite-index subgroup of $\Z^2$,
or equivalently by the vectors in $(n\Z)^2$ 
for some fixed $n \ge 1$.
He observed that this conjecture would imply that the tiling problem
was decidable: on the $n^{\tH}$ day one could search 
for tilings that are invariant under translations in $(n\Z)^2$, 
and on the $n^{\tH}$ night one could search 
for an $n \times n$ square that cannot be tiled
(a compactness argument shows that if the entire plane cannot be tiled,
then there exists $n$ such that the $n \times n$ square cannot be tiled).

But R.~Berger~\cite{Berger1966} 
then proved that the tiling problem was undecidable,
by embedding the halting problem as a subproblem of the tiling problem.
Combining this with Wang's observation shows that there exist finite
collections that can tile the plane, but only \emph{aperiodically}.
Simplifications by R.~Robinson~\cite{Robinson1971}, J.~Kari~\cite{Kari1996}, 
and K.~Culik~II~\cite{Culik1996}
led to the example in Figure~\ref{F:Wang tiles}, with only $13$ tiles.

\begin{figure}
\resizebox{5.5in}{!}{\includegraphics{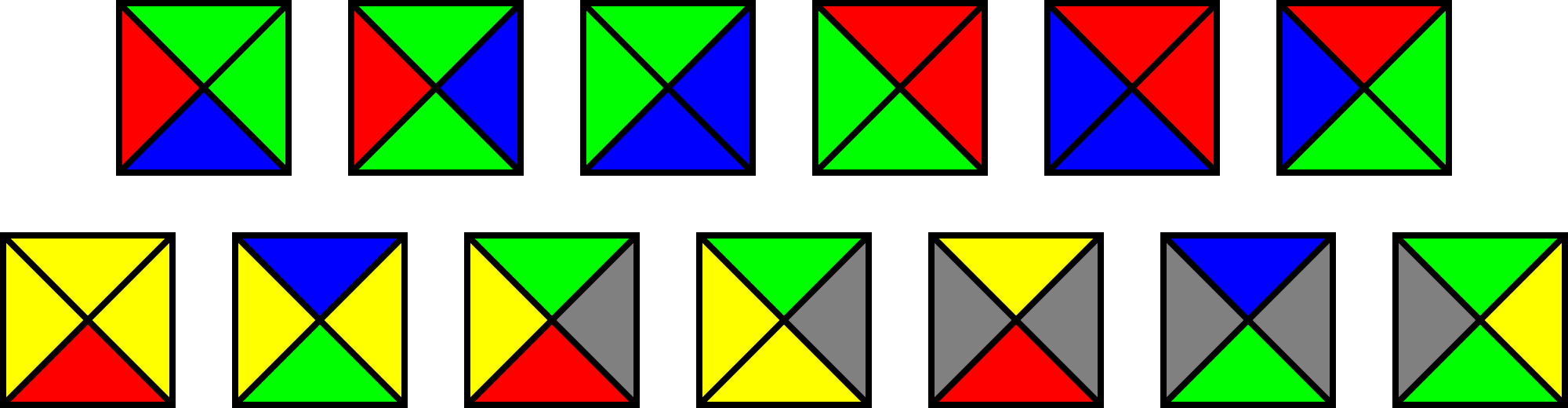}}
\caption{A collection of $13$ Wang tiles that can tile the plane, but only aperiodically.  (Source: \url{http://commons.wikimedia.org/wiki/File:Wang_tiles.svg} ,based on \cite{Culik1996}.)}
\label{F:Wang tiles}
\end{figure}

\begin{remark}
R.~Robinson~\cite{Robinson1978} and M.~Margenstern~\cite{Margenstern2008}
proved similar undecidability results for tilings of 
the \emph{hyperbolic} plane.
\end{remark}

Other tiling problems involve polyominoes.
A \defi{polyomino} is a connected planar region obtained 
by connecting finitely many unit squares along shared edges.
It is unknown whether the following is undecidable
(see~\cite{Rhoads2005}*{p.~330}, for instance):

\newproblem{polyominopp}{Polyomino tiling}
{a polyomino $P$}
{Can one tile the entire plane using translated and rotated copies of $P$?}

\subsection{Graph theory}

Fix finite graphs $G$ and $H$.
Let $V(G)$ be the vertex set of $G$; define $V(H)$ similarly.
A \defi{homomorphism} from $H$ to $G$ is a (not necessarily injective) map
$V(H) \to V(G)$ such that every edge of $H$ maps to an edge of $G$.
The \defi{homomorphism density} $t(H,G)$
is the probability that a uniformly chosen random 
map $V(H) \to V(G)$ is a homomorphism.
If $H_1 \dot{\cup} H_2$ denotes the disjoint union of graphs $H_1$ and $H_2$,
then $t(H_1 \dot{\cup} H_2,G) = t(H_1,G) t(H_2,G)$ for any $G$.

There are certain known inequalities relating these densities.
For instance, for the complete graph $K_n$ on $n$ vertices, 
elementary counting arguments similar to those in \cite{Goodman1959} 
show that 
\[
	t(K_3,G) \ge 2 t(K_2,G)^2 - t(K_2,G),
\]
or equivalently 
\[
	t(K_3,G) - 2 t(K_2 \dot{\cup} K_2,G) + t(K_2,G) \ge 0,
\]
for every finite graph $G$.
This suggests the following problem:

\newproblem{graphpp}{Linear inequalities between graph homomorphism densities}
{$k \in \Z_{\ge 0}$, finite graphs $H_1,\ldots,H_k$, and integers $a_1,\ldots,a_k$}
{Does $a_1 t(H_1,G) + \cdots + a_k t(H_k,G) \ge 0$ hold for all finite graphs $G$?}

H.~Hatami and S.~Norine proved this problem undecidable by relating it 
to Hilbert's tenth problem~\cite{Hatami-Norine2011}*{Theorem~2.12}.

\section{Matrix semigroups}

\subsection{Matrix mortality}

Given a finite list of square integer matrices,
there are many ways to form products, especially
if the factors may be repeated.
Can one decide whether some product yields the zero matrix $\mathbf{0}$?
More formally, we have the following:

\newproblem{mortalitypp}{Matrix mortality problem}
{$n \in \Z_{\ge 0}$ and a finite set $S$ of $n \times n$ integer matrices}
{Does the multiplicative semigroup generated by $S$ contain $\mathbf{0}$?}

M.~Paterson proved that this problem is undecidable,
even for sets of $3 \times 3$ matrices,
via reduction to the Post correspondence problem~\cite{Paterson1970}.
Subsequent work showed that it is undecidable also for
sets consisting of seven $3 \times 3$ 
matrices~\cite{Halava-Harju-Hirvensalo2007}*{Corollary~1}
and for sets consisting of two $21 \times 21$ 
matrices~\cite{Halava-Harju-Hirvensalo2007}*{Theorem~11}.
Whether there exists an algorithm for sets of $2 \times 2$ matrices
remains an open problem.
For a more detailed introduction to the matrix mortality problem,
see~\cite{Halava-Harju2001}.

\subsection{Freeness}

One can ask, given $n$ and $S$, 
whether distinct finite sequences of matrices in $S$
yield distinct products, i.e., whether the semigroup generated by $S$
is \defi{free}.
This turns out to be undecidable too, 
and already for sets of $3 \times 3$ 
matrices~\cite{Klarner-Birget-Satterfield1991}.
In fact, sets of fourteen $3 \times 3$ matrices
suffice for undecidability~\cite{Halava-Harju-Hirvensalo2007}*{Theorem~13}.

\subsection{Finiteness}

Can one decide whether the semigroup generated by $S$ is finite?
This time the answer turns out to be yes,
as was proved independently by G.~Jacob~\cites{Jacob1978,Jacob1977}
and by A.~Mandel and I.~Simon~\cite{Mandel-Simon1977}.

Let us outline a proof.
The main step consists of showing that there is a computable
bound $f(n,s)$ for the size of any finite semigroup of $M_n(\Z)$ 
generated by $s$ matrices.
Now for any $r \ge 1$, 
let $P_r$ be the set of products of length at most $r$ of matrices in $S$.
Start computing $P_1$, $P_2$, and so on.
If $\#P_1 < \cdots < \#P_N$ for $N=f(n,s)+1$,
then $\#P_N > f(n,s)$, so the semigroup is infinite.
Otherwise $P_r = P_{r+1}$ for some $r<N$,
in which case the semigroup equals $P_r$ and hence is finite.

The algorithm can be extended to decide finiteness
of a finitely generated semigroup of $M_n(k)$
for any finitely generated field $k$ presented 
as an explicit finite extension of $\F_p(t_1,\ldots,t_d)$ 
or $\Q(t_1,\ldots,t_d)$.

\subsection{Powers of a single matrix}

There are even some nontrivial questions 
about semigroups generated by one matrix!
Given $A \in M_k(\Z)$, can one decide whether there exists $n \in \Z_{>0}$
such that the upper right corner of $A^n$ is $0$?
This problem, whose undecidability status is unknown, is equivalent
to the following:

\newproblem{recursivesequencepp}{Zero in a linear recursive sequence}
{a linear recursive sequence of integers $(x_n)_{n \ge 0}$, 
specified by giving $x_0,\ldots,x_{k-1} \in \Z$ 
and $a_0,\ldots,a_{k-1} \in \Z$ 
such that $x_{n+k}= a_{k-1} x_{n+k-1} + \cdots + a_0 x_n$ for all $n \ge 0$}
{Does there exist $n$ such that $x_n=0$?}

This is known also as Skolem's problem, since Skolem proved
that $\{n:x_n=0\}$ is a union of a finite set and
finitely many arithmetic progressions~\cite{Skolem1934}.
See~\cite{Halava-Harju-Hirvensalo-Karhumaki-preprint}.

\section{Group theory}

Motivated by topology, M.~Dehn~\cite{Dehn1911} asked three questions
about groups:
\begin{enumerate}[\upshape 1.]
\item 
Is there an algorithm to recognize the identity of a group?
\item
Is there an algorithm to decide 
whether two given elements of a group are conjugate?
\item
Is there an algorithm to decide whether two given groups are isomorphic?
\end{enumerate}
Dehn formulated the questions precisely, 
except for the precise notion of algorithm.

\subsection{Finitely presented groups}

To make sense of such questions, one must specify
how a group is presented and how an element is presented.
A natural choice is to describe a group by means of a finite presentation
such as
\[
	S_3 = \langle r,t: r^3=1, t^2=1, trt^{-1}=r^{-1} \rangle.
\]
This example describes the group of symmetries of an equilateral triangle
as being generated by a $120^\circ$ rotation $r$ and a reflection $t$,
and lists relations satisfied by $r$ and $t$ 
such that all other relations are consequences of these.
More formally, if $n \in \Z_{\ge 0}$, and $F_n$ is the free group
on $n$ generators, and $R$ is a finite subset of $F_n$,
and $H$ is the smallest normal subgroup of $F_n$ containing $R$,
then we may form the quotient group $F_n/H$.
Any group arising in this way is called a 
\defi{finitely presented (f.p.)\ group}.
An element of an f.p.\ group can be specified by giving a \defi{word}
in the generators, i.e., a finite sequence of the generators and their
inverses, such as $rtr^{-1}r^{-1}ttt^{-1}$.

\subsection{The word problem}
\label{S:word problem}

For each fixed f.p.\ group $G$ (or more precisely, for each such group 
equipped with a particular presentation), 
we have the following:

\newproblem{wpp}{Word problem for an f.p.\ group $G$}
{word $w$ in the generators of $G$}
{Does $w$ represent $1$ in $G$?}

The decidability of the word problem
depends only on the isomorphism type of the group,
and not on the presentation.
There are many classes of groups for the word problem is decidable:
finite groups, f.p.\ abelian groups, and 
free groups on finitely many generators, for instance.
(For free groups, one algorithm is to cancel pairs 
of adjacent inverse symbols repeatedly for as long as possible; the resulting
\defi{reduced word} represents $1$ if and only if it is empty.)

But in the 1950s, P.S.\ Novikov~\cite{Novikov1955} 
and W.~Boone~\cite{Boone1959}
independently proved that there is an f.p.\ group for which 
the word problem is undecidable.
The analogue for f.p.\ semigroups had been proved earlier,
by Post~\cite{Post1947} and A.~Markov~\cites{Markov1947,Markov1951};
one proof of this goes through the undecidability
of another word problem, namely that for semi-Thue systems, 
which can also be used to prove undecidability of the 
Post correspondence problem.
Ultimately, the proofs of all these results are via reduction 
to the halting problem:
Novikov and Boone essentially showed, 
that for a certain f.p.\ group $G$, 
one could associate to any computer program $p$
a word $w$ in the generators of $G$ 
such that $w$ represents $1 \in G$ if and only if $p$ halts.

The undecidability of the word problem
admits another proof, using the Higman embedding theorem,
which we state below after introducing a definition.
A finitely generated group is called \defi{recursively presented}
if it has the form $F_n/H$, 
where $H$ is the smallest normal subgroup of $F_n$ 
containing a given subset $R$,
which is no longer required to be finite,
but is instead required to be listable.
Amazingly, it is possible to characterize such groups
without mentioning computability:
\begin{Higman}[\cite{Higman1961}]
A finitely generated group is recursively presented
if and only if
it can be embedded in a finitely presented group.
\end{Higman}

The Higman embedding theorem implies the existence
of an f.p.\ group $P$ with undecidable word problem,
as we now explain.
First, it is rather easy to construct a recursively presented group
for which the word problem is undecidable: for instance,
if $S$ is any non-computable listable set
of positive integers, then one can show that 
in the recursively presented group
\[
	G_S \colonequals \langle a,b,c,d \mid
	a^n b a^{-n} = c^n d c^{-n} \textup{ for all $n \in S$} \rangle,
\]
$a^n b a^{-n} c^n d^{-1} c^{-n}$ represents $1$ if and only if $n \in S$,
so $G_S$ has an undecidable word problem.
By the Higman embedding theorem, $G_S$ embeds in some finitely presented 
group $P$, which therefore has an undecidable word problem too.

\subsection{The conjugacy problem}

For each fixed f.p.\ group $G$, we have another problem:

\newproblem{cpp}{Conjugacy problem for an f.p.\ group $G$}
{words $w_1,w_2$ in the generators of $G$}
{Do $w_1$ and $w_2$ represent conjugate elements of $G$?}

The word problem can be viewed as the subproblem of the conjugacy problem
consisting of the instances for which $w_2$ is the empty word,
which represents $1$.
Thus the conjugacy problem for $G$ is at least as hard as
the word problem for $G$, which means that it is easier 
(or at least no harder)
to find a $G$ for which the conjugacy problem is undecidable.
In fact, P.S.~Novikov published a proof of the existence of an f.p.\ group
for which the conjugacy problem is undecidable
before publishing the result on the word problem,
and this earlier proof is much simpler~\cite{Novikov1954}.

The inequality above between the difficulties of the two problems 
is the only one, in a sense that can be made precise using basic
notions of computability theory,
namely the notions of \defi{c.e.\ degrees of unsolvability} 
and \defi{Turing reducibility} $\le_T$:

\begin{theorem}[\cite{Collins1972}]
Given c.e.\ degrees $W$ and $C$ such that $W \le_T C$,
there exists an f.p.\ group $G$ 
for which the word problem has degree $W$ 
and the conjugacy problem has degree~$C$.
\end{theorem}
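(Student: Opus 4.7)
The plan is to produce $G$ in two stages: first construct a recursively presented group in which the two decision problems already have the desired degrees, then embed it into a finitely presented group via a Higman-style construction that preserves both degrees simultaneously.

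Fix c.e.\ sets $A$ and $B$ with $A$ of degree $W$ and $B$ of degree $C$, chosen so that $A \le_T B$ (using $W \le_T C$). In the first stage I would build a recursively presented group $G_0$ with two layers of generators and relators. The first layer would imitate the $G_S$ construction from Section~\ref{S:word problem} with $S = A$, so that words $a^n b a^{-n} c^n d^{-1} c^{-n}$ represent $1$ in $G_0$ exactly when $n \in A$; this is what will pin the word problem down to degree $W$. The second layer would add further generators (naturally organized as stable letters of HNN extensions) together with relators engineered so that a specific sequence of pairs $(u_n, v_n)$ of words is conjugate in $G_0$ iff $n \in B$. The lower bounds $W \le_T \textup{(word problem)}$ and $C \le_T \textup{(conjugacy problem)}$ would follow directly from these encodings.

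The more delicate half of stage one is the matching upper bounds. I would set up $G_0$ as an iterated HNN/amalgam construction whose Britton-style normal form theorem is effective, so that the question ``does $w = 1$?'' reduces by syntactic manipulation to finitely many membership queries in $A$ (giving word problem $\le_T W$), and ``are $w_1,w_2$ conjugate?'' reduces via the HNN conjugacy theorem to finitely many queries to $B$ (giving conjugacy problem $\le_T C$). The hypothesis $A \le_T B$ is what makes it consistent to use $B$ freely in the conjugacy analysis while insisting that the word problem alone does not see more than $A$: word-problem reductions must be expressible using only $A$-queries even though the relators introduced for the $B$-layer are visible in both problems.

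In the second stage I would apply the Higman embedding theorem to embed $G_0$ into a finitely presented group $G$. The standard proof of the Higman embedding theorem is already known to preserve the Turing degree of the word problem, so it gives word problem of degree $W$ for $G$; the task is to refine the construction so that conjugacy degree is also preserved at $C$. This requires a conjugacy-sensitive version of each HNN extension and amalgamated free product used in the simulation of the Turing machine that lists the relators, so that Collins--Miller--style conjugacy theorems apply at every intermediate group and show that conjugacy in the next group is decidable relative to conjugacy in the previous one. An inductive bookkeeping then yields conjugacy problem $\le_T C$ for $G$, while elements of $G_0$ remain non-conjugate in $G$ when they were non-conjugate in $G_0$ (so $C \le_T \textup{(conjugacy problem of }G)$ survives the embedding).

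The principal obstacle I expect is this second stage: the classical Higman construction uses many HNN extensions, each of which can in principle create new conjugacies between elements of the base group, and controlling these so that the degree does not jump above $C$ requires the base groups to have conjugacy problems that behave well under the associated subgroup identifications (e.g.\ that the associated subgroups are ``conjugacy separated'' in a suitable effective sense). Arranging $G_0$ and the intermediate groups to have this property, while keeping the word problem reductions purely within degree $W$, is the real technical content of the theorem; the lower bounds and the stage-one encodings are comparatively routine.
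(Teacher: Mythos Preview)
The paper does not prove this theorem: it is stated with a citation to \cite{Collins1972}, and the paragraph following it merely unpacks what the statement means in terms of oracles. So there is no proof in the paper to compare your proposal against.

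As for the proposal itself, your two-stage plan (build a recursively presented $G_0$ with the right degrees, then Higman-embed while preserving both degrees) is broadly the shape of Collins's argument, and you correctly identify where the real work lies: the conjugacy-preserving embedding. But what you have written is a strategy, not a proof. The critical step---arranging that every HNN extension in the Higman construction is ``conjugacy-controlled'' so that no new conjugacies of degree above $C$ are created, while simultaneously keeping the word problem pinned at $W$---is exactly the theorem, and you have only named it rather than carried it out. In particular, the standard Higman embedding does \emph{not} preserve conjugacy degree in general, and the modifications needed (suitable malnormality or conjugacy-separation conditions on all the associated subgroups, effective Collins--Miller conjugacy criteria at each stage) interact with one another in ways that require substantial bookkeeping you have not supplied. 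Your stage-one construction is also underspecified: encoding $B$ into conjugacy via HNN stable letters while keeping those same relators invisible to the word problem modulo $A$ is delicate, and ``Britton normal form is effective'' is not by itself enough to guarantee the word problem stays at $W$ once the $B$-layer relators are present.

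In short: reasonable outline, honest about its own gap, but the gap is the whole theorem.
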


This means that given c.e.\ subsets $W$ and $C$ of $\N$
such that the membership problem for $W$ is decidable 
given an oracle for the membership problem for $C$,
there exists an f.p.\ group $G$
such that the word problem for $G$ can be solved using
an oracle for membership in $W$ and vice versa,
and the conjugacy problem for $G$ can be solved using
an oracle for membership in $C$ and vice versa.

\subsection{Properties of groups}
\label{S:properties}

Instead of fixing an f.p.\ group $G$,
one can ask about algorithms that accept a finite presentation as input
and try to decide whether the group it defines has a given property.
For a wide variety of natural properties, 
the decision problem turns out to be undecidable.
To make this precise, define a \defi{Markov property}
to be a property $P$ of f.p.\ groups, depending only on the isomorphism
type of the group, not on the presentation,
such that 
\begin{enumerate}[\upshape 1.]
\item there exists an f.p.\ group $G_1$ with $P$, and
\item there exists an f.p.\ group $G_2$ that cannot be embedded in 
any f.p.\ group with $P$.
\end{enumerate}
Examples are the properties of being trivial, finite, abelian, 
nilpotent, solvable, free, or torsion-free,
because all these properties are inherited by subgroups.
The property of having a decidable word problem 
is yet another Markov property, for the same reason!

Using the undecidability of the word problem, 
S.I.~Adian~\cites{Adyan1957a,Adyan1957b} and M.~Rabin~\cite{Rabin1958}
proved the following:
\begin{theorem}
\label{T:Adian-Rabin}
For any Markov property $P$, 
it is impossible to decide whether an f.p.\ group $G$ has $P$.
\end{theorem}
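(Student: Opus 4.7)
The plan is to reduce the word problem for a fixed f.p.\ group with undecidable word problem to the decision problem for property $P$. By Novikov--Boone (Section~\ref{S:word problem}), fix an f.p.\ group $B$ whose word problem is undecidable, and let $G_1,G_2$ be the groups supplied by the Markov hypothesis: $G_1$ has $P$, while $G_2$ embeds in no f.p.\ group with $P$.

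I aim to construct, uniformly in a word $w$ in the generators of $B$, a finite presentation of a group $K_w$ enjoying the following dichotomy: if $w =_B 1$ then $K_w$ has $P$, while if $w \ne_B 1$ then $G_2$ embeds in $K_w$ and hence (by the defining property of $G_2$) $K_w$ does \emph{not} have $P$. Given such a construction, any algorithm deciding $P$, when applied to $K_w$, would decide whether $w =_B 1$, contradicting the undecidability of the word problem for $B$; this would establish Theorem~\ref{T:Adian-Rabin}.

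To build $K_w$ I would carry out Rabin's trick. Start with the free product $H \colonequals B * G_2 * F$, where $F$ is a free group on sufficiently many auxiliary generators. Then form a short tower of HNN extensions of $H$ whose stable letters and associated subgroups are arranged so that, via Britton's lemma, the new relations are equivalent to ``$w = 1$ in $B$'' together with a clause that trivializes every generator of $G_2$. Finally set $K_w \colonequals G_1 * (\text{the HNN tower})$, which has a finite presentation computable from $w$. If $w =_B 1$, the tower collapses the $G_2$-generators and the stable letters, so $K_w \isom G_1$ has $P$. If $w \ne_B 1$, Britton's normal-form lemma shows that the inclusion of $G_2$ into $H$ survives every HNN extension, so $G_2 \injects K_w$ and $K_w$ cannot have $P$.

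The main obstacle is the HNN engineering. Arranging the relations so that ``$w =_B 1 \Rightarrow G_2$ is killed'' is the easy direction, built in by design; the delicate direction is ``$w \ne_B 1 \Rightarrow G_2$ survives,'' which requires verifying inductively at each stage of the tower that the associated subgroups are genuinely isomorphic via the prescribed map, so that Britton's lemma applies and no unintended collapse occurs. This is precisely why one uses a ladder of HNN extensions over the auxiliary free factor $F$, where one has enough room to separate the subgroups, rather than a naive one-step construction. Once this Britton verification is in hand, both cases of the dichotomy, and thus the theorem, follow formally.
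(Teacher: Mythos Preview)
The paper does not actually supply a proof of Theorem~\ref{T:Adian-Rabin}; it simply attributes the result to Adian and Rabin after the one-line remark that their proof uses the undecidability of the word problem. Your proposal is consistent with that remark and is, in outline, exactly Rabin's original argument: reduce the word problem for a fixed Novikov--Boone group $B$ to the decision problem for $P$ by building, computably in $w$, an f.p.\ group $K_w$ that is (a free factor times) $G_1$ when $w =_B 1$ and contains $G_2$ otherwise.

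Two small tightenings worth flagging before you flesh this out. First, when $w =_B 1$ the HNN tower must collapse \emph{completely}---not just the $G_2$-generators and the stable letters, but also $B$ and the auxiliary free factor $F$---so that the tower becomes the trivial group and $K_w \cong G_1 * \{1\} \cong G_1$; your phrasing undersells this. Second, for the Britton-lemma step to apply at the base of the ladder, one typically first ensures that $w$ has infinite order whenever $w \ne_B 1$ (for instance by replacing $B$ with $B * \langle t \rangle$ and $w$ with $wt$), so that the associated cyclic subgroups really are infinite cyclic and the HNN extensions are honest. With these adjustments, the dichotomy and hence the theorem follow as you describe.
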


\begin{corollary}
\label{C:trivial group}
It is impossible to decide whether a finite presentation
describes the trivial group.
\end{corollary}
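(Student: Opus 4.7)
The plan is to deduce Corollary \ref{C:trivial group} directly from Theorem \ref{T:Adian-Rabin} by verifying that ``being the trivial group'' is a Markov property. First I would note that triviality is an isomorphism-invariant property of f.p.\ groups, so it makes sense as a candidate Markov property.

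Next I would check the two conditions in the definition of Markov property. For condition~1, take $G_1$ to be the trivial group itself, which is finitely presented (e.g.\ by $\langle x \mid x \rangle$), and which visibly has the property. For condition~2, take $G_2$ to be any nontrivial f.p.\ group, for instance $\Z = \langle x \mid \; \rangle$; since any subgroup of the trivial group is trivial, $G_2$ cannot be embedded into any f.p.\ group with the property. Both witnesses are immediate and require no real work.

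With both conditions verified, Theorem \ref{T:Adian-Rabin} applies and yields the corollary. The only conceivable obstacle is the (trivial) check that triviality really does depend only on the isomorphism type, which is tautological. So this corollary is essentially a direct instantiation of the Adian--Rabin theorem, and the substance of the argument is already contained in Theorem \ref{T:Adian-Rabin} itself.
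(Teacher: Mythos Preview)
Your proposal is correct and matches the paper's approach exactly: the paper explicitly lists ``trivial'' among the examples of Markov properties (noting these properties are inherited by subgroups) immediately before stating Theorem~\ref{T:Adian-Rabin}, so Corollary~\ref{C:trivial group} is presented as an immediate consequence with no further proof.
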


Deciding triviality is a subproblem of the general problem 
of deciding whether two finite presentations define isomorphic groups,
so the isomorphism problem is undecidable too.

For a more extended survey of undecidability in group theory, 
see~\cite{Miller1992}.

\section{Topology}

\subsection{The homeomorphism problem}
\label{S:homeomorphism problem}

Given two manifolds, can one decide whether they are homeomorphic?
As usual, to make sense of such a question, we need to specify
how a manifold is described.  
Since every compact smooth manifold can be triangulated,
a natural choice is to use finite simplicial complexes
to represent manifolds.

\newproblem{homeopp}{Homeomorphism problem}
{finite simplicial complexes $M$ and $N$ representing smooth manifolds}
{Are $M$ and $N$ homeomorphic?}

\noindent
(One could alternatively replace homeomorphic by PL-homeomorphic,
where PL stands for piecewise-linear.)

Given a finite simplicial complex $M$ representing a compact manifold,
one obtains a subproblem
of the homeomorphism problem by fixing the first input to be $M$:

\newproblem{recognition}{Recognizing $M$}
{a finite simplicial complex $N$ representing a smooth manifold}
{Is $N$ homeomorphic to $M$?}

One can also restrict these problems according to dimension.
For $d \le 3$, the homeomorphism problem for $d$-folds turns out to be 
decidable, because of classification theorems; for $d=3$,
this uses the work of G.~Perelman on W.~Thurston's geometrization conjecture.
But for each $d \ge 4$, the homeomorphism problem for $d$-folds
is undecidable, as was proved by Markov~\cite{Markov1958}.
Moreover, S.P.\ Novikov (the son of P.S.\ Novikov!)\ 
proved that recognizing whether a finite simplicial complex
is homeomorphic to the $d$-sphere $S^d$ is an undecidable problem
for each $d \ge 5$ 
(a proof appears in the appendix to~\cite{Volodin-Kuznecov-Fomenko1974}).
{}From this, one can prove that 
for any fixed compact $d$-fold $M$ with $d \ge 5$, 
recognizing whether a finite simplicial complex is homeomorphic to $M$ 
is undecidable.

All these results are proved by reduction 
to the undecidability results for f.p.\ groups.
We now sketch the proofs of the unrecognizability results.  
(For a survey with more details, see~\cite{Weinberger2005}*{Chapter~2}.)
Fix $d \ge 5$.
Choose an f.p.\ group $G$ with undecidable word problem.
{}From $G$ and a word $w$ in the generators of $G$,
one can build an f.p.\ group $G_w$ such that $G_w$ is trivial if 
and only if $w$ represents $1$, and such that the first and second
homology groups $H_1(G_w)$ and $H_2(G_w)$ are trivial.
These conditions on $H_1$ and $H_2$ of an f.p.\ group 
are necessary and sufficient for
there to exist a \defi{homology sphere}
(a compact $d$-manifold with the same homology as $S^d$) 
with that fundamental group.
In fact, one can effectively construct a finite simplicial complex
$X_w$ representing such a homology sphere with fundamental group $G_w$.
Now: 
\begin{itemize}
\item
If $w$ represents $1$, then $G_w$ is trivial, and $X_w$ is a
simply connected homology sphere, but in dimensions $d \ge 5$
a theorem of S.~Smale~\cite{Smale1961}
implies that any such space is homeomorphic to $S^d$.
\item 
If $w$ does not represent $1$,
then $X_w$ has nontrivial fundamental group, so
$X_w$ is not homeomorphic to $S^d$.
\end{itemize}
Hence, if we had an algorithm to recognize whether a finite simplicial 
complex is homeomorphic to $S^d$,
it could be used to solve the word problem for $G$,
a contradiction.
Thus recognizing $S^d$ is an undecidable problem.

Next suppose that $M$ is {\em any} compact $d$-fold for $d \ge 5$.
The \defi{connected sum} $M \# X_w$ is obtained by punching a small
hole in each of $M$ and $X_w$ and connecting them with a thin cylinder.
This construction can be done effectively on finite simplicial complexes.
The fundamental group $\pi_1(M \# X_w)$ is the free product of
the groups $\pi_1(M)$ and $\pi_1(X_w)$.
A group-theoretic theorem states that a free product $G * H$ 
of finitely generated groups can be isomorphic to $G$ 
only if $H$ is trivial.
Now:
\begin{itemize}
\item
If $w$ represents $1$, then $X_w$ is homeomorphic to $S^d$,
and $M \# X_w$ is homeomorphic to $M$.
\item 
If $w$ does not represent $1$,
then $M \# X_w$ does not even have the same fundamental group as $M$.
\end{itemize}
Hence, if we had an algorithm to recognize $M$,
it could be used to solve the word problem for $G$,
a contradiction.

\begin{question}
Is $S^4$ recognizable?
\end{question}

\begin{remark}
P.~Seidel used similar ideas to find undecidable problems
in \emph{symplectic} geometry~\cite{Seidel2008}*{Corollary~6.8}.
\end{remark}

\subsection{Am I a manifold?}

We have seen that it is impossible to recognize whether two
manifolds represented by given finite simplicial complexes
are homeomorphic.
Even worse, one cannot even decide whether 
a finite simplicial complex represents a manifold!
In other words, the following problem is undecidable:

\newproblem{manifoldpp}{Manifold detection}
{finite simplicial complex $M$}
{Is $M$ homeomorphic to a manifold?}

Let us prove the undecidability by embedding the word problem
in this problem.
Recall that in Section~\ref{S:homeomorphism problem},
we constructed a finite simplicial complex $X_w$, in terms of a word $w$
in the generators of an f.p.\ group $G$ with unsolvable word problem, 
such that
\begin{itemize}
\item if $w$ represents $1$, 
	then $X_w$ is homeomorphic to a sphere $S^d$, and
\item if $w$ does not represent $1$, 
	then $X_w$ is a manifold with nontrivial fundamental group.
\end{itemize}
The \defi{suspension} $SX_w$ of $X_w$ 
is the simplicial complex 
whose vertices are those of $X_w$ together with two new points $a$ and $b$,
and set of faces is
$\Union_{\Delta \in X_w} \{\Delta, \Delta \union\{a\}, \Delta \union\{b\} \}$.
Geometrically, one may realize $X_w$ in a hyperplane in $\R^n$,
and $a$ and $b$ as points on either side of the hyperplane;
then $SX_w$ is the union of the line segments connecting a 
point of $\{a,b\}$ to a point of the realization of $X_w$.
Now:
\begin{itemize}
\item If $w$ represents $1$,
       then $X_w$ is homeomorphic to a sphere $S^d$,
       and $SX_w$ is homeomorphic to a sphere $S^{d+1}$.
\item If $w$ does not represent $1$, 
then $X_w$ has nontrivial fundamental group,
so $SX_w$ contains loops arbitrarily close to $a$
with nontrivial class in the fundamental group of $SX_w - \{a,b\}$,
so $SX_w$ is not locally euclidean at $a$.
\end{itemize}
Thus $SX_w$ is homeomorphic to a manifold if and only if $w$
represents $1$.
Therefore no algorithm can decide whether a given
finite simplicial complex represents a manifold.

\subsection{Knot theory}

A \defi{knot} is a smooth embedding of the circle $S^1$ in $\R^3$.
Two knots are \defi{equivalent} if there is an \defi{ambient isotopy}
that transforms one into other; loosely speaking, this means that
there is a smoothly varying family of diffeomorphisms of $\R^3$,
parametrized by an interval,
starting with the identity and ending with a diffeomorphism
that maps one knot onto the other.

How do we describe a knot in a way suitable for input into a computer?
A knot may be represented by a finite sequence of distinct points in $\Q^3$:
the knot is obtained by connecting the points in order by line segments,
the last of which connects the last point back to the first point
(we assume that each segment intersects its neighbors only at its endpoints
and intersects other segments not at all,
and the piecewise-linear curve should then be rounded at the vertices
so as to obtain a smooth curve).

\newproblem{knotpp}{Knot equivalence problem}
{knots $K_1$ and $K_2$, each represented by a finite sequence in $\Q^3$}
{Are $K_1$ and $K_2$ equivalent?}

W.~Haken constructed an algorithm to decide whether 
a knot was unknotted~\cite{Haken1961}, 
and for the general problem he outlined an approach, the last step
of which was completed by G.~Hemion~\cite{Hemion1979}.
Thus the knot equivalence problem is decidable!

One can also consider knots in higher dimension.
An \defi{$n$-dimensional knot} is a smooth embedding of $S^n$ 
in $\R^{n+2}$ (or $S^{n+2}$),
and one can define equivalence as before.
Any embedding equivalent to 
the standard embedding of $S^n$ as the unit sphere in a hyperplane
in $\R^{n+2}$ is called \defi{unknotted}.
A.~Nabutovsky and S.~Weinberger prove that 
the problem of deciding whether an $n$-dimensional knot
is unknotted is undecidable for $n \ge 3$~\cite{Nabutovsky-Weinberger1996}.
Since this is a subproblem of the equivalence problem 
for $n$-dimensional knots, the latter is undecidable too.

Nabutovsky and Weinberger leave open the following question:
\begin{question}
Is the equivalence problem for $2$-dimensional knots decidable?
\end{question}

See~\cite{Soare2004} for an exposition of some other
undecidable problems in topology and differential geometry.

\section{Number theory}

\subsection{Hilbert's tenth problem}
\label{S:H10}

One of the 23 problems in a list that Hilbert published after 
a famous lecture in 1900 asked for an algorithm to 
decide the solvability of diophantine equations:

\newproblem{h10}{Hilbert's tenth problem}
{a multivariable polynomial $f \in \Z[x_1,\ldots,x_n]$}
{Does there exist $\vec{a}\in \Z^n$ with $f(\vec{a})=0$?}

This was eventually proved undecidable by 
Yu.~Matiyasevich~\cite{Matiyasevich1970}.
To explain more, we need a definition.
Call a subset $A$ of $\Z$ \defi{diophantine} 
if there exists a polynomial $p(t,\vec{x}) \in \Z[t,x_1,\ldots,x_n]$
such that
\[
	A = \{ a \in \Z : (\exists \vec{x} \in \Z^n) \; p(a,\vec{x})=0\}.
\]
In other words, if one views $p(t,\vec{x})=0$ 
as a family of diophantine equations in the variables $x_1,\ldots,x_n$
depending on a parameter $t$, 
then $A$ is the set of parameter values
that yield a solvable diophantine equation.

It is easy to see that diophantine sets are listable.
What is remarkable is that the converse holds:

\begin{theorem}[conjectured in~\cite{Davis1953}*{p.~35}, proved in~\cite{Matiyasevich1970}]
\label{T:DPRM}
A subset of $\Z$ is diophantine if and only if it is listable.
\end{theorem}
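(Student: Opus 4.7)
The forward direction is immediate: given $A = \{a \in \Z : \exists \vec{x} \in \Z^n,\ p(a,\vec{x}) = 0\}$, one enumerates $A$ by running through the pairs $(a,\vec{x}) \in \Z^{n+1}$ in some computable order and printing $a$ whenever $p(a,\vec{x}) = 0$. All the work is in the converse. My plan follows the historical Davis-Putnam-Robinson-Matiyasevich route, reducing first to subsets of $\N$: each integer variable becomes a difference of two naturals, and Lagrange's four-square theorem encodes the constraint $a \ge 0$ as the diophantine condition $a = x_1^2 + x_2^2 + x_3^2 + x_4^2$.

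The first task is to set up a calculus of diophantine relations. Closure under conjunction comes from $p = 0 \wedge q = 0 \iff p^2 + q^2 = 0$; closure under disjunction from $p = 0 \vee q = 0 \iff pq = 0$; and closure under existential quantification is immediate. With these one verifies that $a \le b$, $a \mid b$, $a \equiv b \pmod{c}$, and other basic relations are diophantine. Next comes the Davis-Putnam-Robinson theorem that every listable set is \emph{exponential diophantine}, meaning definable by an equation of the same shape as a diophantine one but permitting integer exponentiation. I would prove this starting from a Turing-machine description of the listable set $A$: one packages the sequence of configurations of a halting computation on input $a$ as a single natural number in a large base, and then converts the assertion ``each successive configuration follows correctly from the previous'' from a bounded universal quantifier into an existential one via G\"odel's $\beta$-function (equivalently, the Chinese remainder theorem), a step that brings exponentiation into the definition.

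The decisive step, and the main obstacle, is Matiyasevich's theorem that the graph $\{(a,b,c) : c = a^b\}$ of exponentiation is itself diophantine. Once this is in hand, every occurrence of exponentiation in the exponential diophantine definition of $A$ can be replaced by a fresh variable subject to polynomial equations, yielding an ordinary diophantine definition of $A$ and finishing the proof. Julia Robinson had earlier reduced this to exhibiting any single diophantine relation $J(u,v)$ of roughly exponential growth, meaning $J(u,v)$ forces $v \le u^u$ but admits, for every $k$, a satisfying pair with $v > u^k$. I would follow Matiyasevich and supply such a $J$ using Fibonacci numbers: the Pell-like recurrence satisfied by the even-indexed terms $F_{2n}$ allows one, via congruence and divisibility identities, to cut out the graph $\{(n, F_{2n})\}$ by polynomial equations, and this sequence grows at the desired exponential rate. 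Establishing the diophantine definition of $F_{2n}$ from these number-theoretic identities is where essentially all the real difficulty is concentrated; everything else in the proof is structural by comparison.
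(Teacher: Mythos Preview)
Your proposal is correct and follows exactly the route the paper sketches: the paper does not give a detailed proof but simply records that Davis--Putnam--Robinson established the exponential diophantine version and that Matiyasevich finished by showing exponentiation is diophantine. Your outline expands precisely this two-step strategy, with the standard Fibonacci/Pell realization of the Julia Robinson hypothesis for the Matiyasevich step.
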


Work of M.~Davis, H.~Putnam, and J.~Robinson culminating 
in~\cite{Davis-Putnam-Robinson1961} 
proved the analogue for \defi{exponential diophantine equations},
in which polynomials are replaced by expressions built up from
integers using not only addition and multiplication, but also exponentiation.
Matiyasevich then showed how to express exponentiation 
in diophantine terms, to complete the proof of Theorem~\ref{T:DPRM}.

Theorem~\ref{T:DPRM} immediately yields a negative answer to
Hilbert's tenth problem, because there are listable subsets $A$ of $\Z$
for which there is no algorithm to decide whether a given integer
belongs to $A$ (see Section~\ref{S:listable}).
The role played by Theorem~\ref{T:DPRM} for Hilbert's tenth problem
is similar to the role
played by the Higman embedding theorem (Section~\ref{S:word problem})
for the word problem.

\subsection{Hilbert's tenth problem for other rings}

After the negative answer to Hilbert's tenth problem,
researchers turned to variants in which the ring $\Z$
is replaced by some other commutative ring,
such as $\Q$, or the ring of integers $\calO_k$ 
of a fixed number field.

\subsubsection{The field of rational numbers}
\label{S:Q}

The problem for $\Q$ is equivalent to the problem of deciding 
whether an algebraic variety over $\Q$ has a rational point,
because any variety is a finite union of affine varieties,
and any system of equations $f_1(\vec{x})=\cdots=f_m(\vec{x})=0$
is solvable over $\Q$ if and only if 
the single equation $f_1(\vec{x})^2+\cdots+f_m(\vec{x})^2=0$ is.
It is still not known whether an algorithm exists for this problem.
The notion of a subset of $\Q$ being \defi{diophantine over $\Q$}
can be defined as in the previous section,
except with all variables running over $\Q$ instead of $\Z$.
If the subset $\Z$ were diophantine over $\Q$,
then an easy reduction to Matiyasevich's theorem
would prove the undecidability of Hilbert's tenth problem for $\Q$.
J.~Koenigsmann~\cite{Koenigsmann2010-preprint}*{Corollary~2}, 
building on \cite{Poonen2009-ae},
proved that the \emph{complement} $\Q-\Z$ is diophantine over $\Q$;
a generalization to number fields was proved by J.~Park~\cite{Park-preprint}.

In hopes of finding an undecidable problem, 
one can make the problem harder, by asking for an algorithm
to decide the truth of \defi{first-order sentences}, 
such as
\[
        (\exists x) (\forall y) (\exists z) (\exists w) 
		\quad (x \cdot z + 3 = y^2) \; \vee \; \neg ( z = x + w ).
\]
Using the theory of quadratic forms over $\Q$, 
J.~Robinson~\cite{Robinson1949} 
proved that the following decision problem is undecidable:

\newproblem{firstorderpp}{Decision problem for the first-order theory of $\Q$}
{a first-order sentence $\phi$ in the language of fields}
{Is $\phi$ true when the variables run over elements of $\Q$?}

\subsubsection{Rings of integers}

Recall that a \defi{number field} is a finite extension $k$ of $\Q$,
and that the \defi{ring of integers} $\calO_k$ of $k$ 
is the set of $\alpha \in k$ satisfying $f(\alpha)=0$ 
for some monic $f(x) \in \Z[x]$.
The problem for $\calO_k$ is conjectured
to have a negative answer for each $k$~\cite{Denef-Lipshitz1978}.
This has been proved for some $k$,
namely when
$k$ is totally real~\cite{Denef1980}, 
$k$ is a quadratic extension of 
      a totally real number field~\cite{Denef-Lipshitz1978}, or 
$k$ has exactly one conjugate pair of nonreal 
      embeddings~\cites{Pheidas1988,Shlapentokh1989}.
Through arguments of the author and A.~Shlapentokh 
\citelist{\cite{Poonen2002-h10-over-Ok}*{Theorem~1} \cite{Shlapentokh2008}*{Theorem~1.9(3)}},
certain statements about ranks of elliptic curves
over number fields would imply a negative answer for every $k$,
and such statements have been proved 
by B.~Mazur and K.~Rubin~\cite{Mazur-Rubin2010}*{\S8} 
assuming a conjecture of I.~Shafarevich and J.~Tate.

For more about Hilbert's tenth problem and its variants, 
see the survey articles~\cites{Davis-Matiyasevich-Robinson1976,Mazur1994,Poonen2008-undecidability},
the books~\cites{Matiyasevich1993,H10book,Shlapentokh2007book}, 
the website~\cite{H10web},
and the movie~\cite{H10-movie}.

\section{Analysis}

\subsection{Inequalities}
\label{S:inequalties}

Given a real-valued function on $\R$ or on $\R^n$,
can one decide whether it is nonnegative everywhere?
The answer depends on the kind of functions allowed as input.

\subsubsection{Real polynomials}

For polynomials in any number of variables, 
A.~Tarski showed that the answer is yes 
(to make sense of this, one should restrict the input
to have coefficients in $\Q$
or in the field $\R \intersect \Qbar$ of real algebraic numbers,
so that the polynomial admits a finite encoding suitable for
a Turing machine).
In fact, Tarski~\cite{Tarski1951} 
gave a decision procedure, based on elimination of quantifiers
for $\R$ in the language of ordered fields, for the following 
more general problem:

\newproblem{tarskipp}{Decision problem for the first-order theory of the ordered field $\R$}
{a first-order sentence $\phi$ in the language of ordered fields}
{Is $\phi$ true when the variables run over elements of $\R$?}

\subsubsection{Adjoining the exponential function}

If one tries to extend this by allowing expressions involving also
the real exponential function, then one runs into questions
of transcendental number theory whose answer is still unknown.
For example, can one decide for which rational numbers $r,s,t$
the equation
\[
	e^{e^r} + e^s + t = 0
\]
holds?
But assuming Schanuel's conjecture~\cite{LangTranscendental}*{pp.~30--31}, 
which rules out such ``accidental identities'',
A.~Macintyre and A.~Wilkie~\cite{Macintyre-Wilkie1996} 
gave a decision algorithm
for all first-order sentences for $\R$ with exponentiation 
in addition to the usual operations and $\le$.

\begin{remark}
In contrast, for the set $\EE$ of \emph{complex} functions
built up from integers and $z$ using addition, multiplication,
and composing with $e^z$,
A.~Adler proved that it is impossible to decide
whether a finite list of functions in $\EE$ has a common zero 
in $\C$~\cite{Adler1969}*{Theorem~1}.
This can be proved by reduction to Hilbert's tenth problem,
using two observations:
\begin{enumerate}[\upshape 1.]
\item One can characterize $\Q$ in $\C$ as the set of ratios of zeros 
of $e^z-1$.
\item One can characterize $\Z$ as the set of $x \in \Q$ 
such that there exists $z \in \C$ with $e^z = 2$ 
and $e^{zx} \in \Q$.
\end{enumerate}
\end{remark}

\subsubsection{Adjoining the sine function}
\label{S:adjoining sin}

Adjoining most other transcendental functions 
leads quickly to undecidable problems.
For example, consider the following, a variant of a theorem of 
D.~Richardson:

\begin{theorem}[cf.~\cite{Richardson1968}*{\S1, Corollary to Theorem One}]
\label{T:everywhere nonnegative}
There is a polynomial $P \in \Z[t,x_1,\ldots,x_n,y_1,\ldots,y_n]$
such that for each $a \in \Z$, the real analytic function
\[
	P(a,x_1,\ldots,x_n,\sin \pi x_1,\ldots, \sin \pi x_n)
\]
on $\R^n$ is either everywhere greater than $1$,
or else assumes values less than $-1$ and values greater than $1$,
but it is impossible to decide which, given $a$.
\end{theorem}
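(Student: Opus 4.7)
The plan is to reduce Hilbert's tenth problem (Section~\ref{S:H10}) to the dichotomy problem stated. By Matiyasevich's theorem, fix a polynomial $f(t, x_1, \ldots, x_n) \in \Z[t, \vec{x}]$ for which no algorithm can decide, given $a \in \Z$, whether $f(a, \vec{x}) = 0$ has a solution $\vec{x} \in \Z^n$. It suffices to exhibit a single $P \in \Z[t, \vec{x}, \vec{y}]$ such that, setting $F_a(\vec{x}) \colonequals P(a, \vec{x}, \sin \pi x_1, \ldots, \sin \pi x_n)$, the absence of an integer zero of $f(a, \cdot)$ corresponds to $F_a > 1$ everywhere, while existence of such a zero forces $F_a$ to take values $< -1$ (and of course also values $> 1$).

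I would try $P$ of the form
\[
P(t, \vec{x}, \vec{y}) = A\, f(t, \vec{x})^2 + h(t, \vec{x})\,(y_1^2 + \cdots + y_n^2) - C,
\]
where $A$ and $C$ are positive integers with $C > 1$ and $h \in \Z[t, \vec{x}]$ is a polynomial with nonnegative coefficients, all to be chosen in terms of $f$. Because $\sin \pi m = 0$ for $m \in \Z$, one has $F_a(\vec{m}) = A\, f(a, \vec{m})^2 - C$ at each integer lattice point $\vec{m}$. If $f(a, \vec{m}) = 0$, then $F_a(\vec{m}) = -C < -1$; and arranging that $h$ grow with $\vec{x}$ ensures $F_a \to +\infty$ along rays staying a positive distance from $\Z^n$, supplying the required values $> 1$.

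The heart of the argument is showing that if $f(a, \vec{m}) \ne 0$ for every $\vec{m} \in \Z^n$, then $F_a(\vec{x}) > 1$ for all $\vec{x}$. Partition $\R^n$ into $R_{\text{near}}$, the $\tfrac14$-sup-neighborhood of $\Z^n$, and its complement $R_{\text{far}}$. On $R_{\text{far}}$, some $|\sin \pi x_i| \ge \sqrt{2}/2$, so $h \sum y_i^2 \ge h/2$, and a sufficiently large constant term in $h$ makes $F_a > 1$. On $R_{\text{near}}$, write $\vec{x} = \vec{m} + \vec{\delta}$ with $\vec{m} \in \Z^n$ and $\|\vec{\delta}\|_\infty \le 1/4$. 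Since $f(a, \vec{m}) \in \Z \setminus \{0\}$, one has $|f(a, \vec{m})| \ge 1$, and the mean value theorem gives $|f(a, \vec{x})| \ge 1 - n \|\vec{\delta}\|_\infty M_0(a, \vec{x})$, where $M_0(t, \vec{x}) \in \Z[t, \vec{x}]$ is an explicit polynomial with nonnegative coefficients dominating $\max_i |\partial_i f|$ on the $\tfrac14$-ball around $(a, \vec{x})$. Split into two subcases: if $\|\vec{\delta}\|_\infty \le 1/(2n M_0)$ then $|f(a, \vec{x})| \ge 1/2$, so $Af^2 \ge A/4$, which exceeds $C+1$ provided $A \ge 4(C+2)$; otherwise some $|\delta_i| > 1/(2n M_0)$, and from $|\sin \pi \delta_i| \ge 2|\delta_i|$ (valid for $|\delta_i| \le 1/2$) we get $\sin^2(\pi x_i) \ge 1/(n^2 M_0^2)$, so taking $h(t, \vec{x}) \ge n^2 M_0(t, \vec{x})^2 (C+2)$ (plus an additive constant for the $R_{\text{far}}$ estimate) yields $h \sum \sin^2(\pi x_i) \ge C + 2$. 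In every case $F_a \ge 2 > 1$.

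The main obstacle is arranging uniformity in $a$: the polynomial $h$, fixed once and for all, must bound gradient data of $f(a, \cdot)$ at every integer point simultaneously for every $a \in \Z$. This is precisely why $h$ must be allowed to depend on $t$, so that its $t$-coefficients can absorb the $a$-dependence of the gradient estimate encoded in $M_0$; passing through the explicit polynomial bound $M_0(t, \vec{x})$ rather than a pointwise supremum is what makes the whole construction a polynomial in the allowed variables. Everything else reduces to routine elementary estimation, with $A$, $C$, and $h$ all computable from $f$.
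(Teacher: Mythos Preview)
Your approach is essentially identical to the paper's: the paper defines
\[
L_a(\vec{x}) \colonequals -2 + 4\,p(a,\vec{x})^2 + G(a,\vec{x})\sum_{i=1}^n \sin^2 \pi x_i,
\]
which is exactly your $F_a$ with $A=4$, $C=2$, $h=G$. The paper simply asserts that ``a little analysis'' produces a suitable $G$, whereas you have carried out that analysis via the near/far decomposition and the mean-value-theorem subcase split; this is precisely the standard way to fill in that step, and your estimates are correct. One small point to tighten: ``nonnegative coefficients'' for $h$ and $M_0$ does not by itself give a lower bound when $t$ or the $x_i$ are negative, so when you actually build $M_0$ and $h$ you should make them polynomials in $t^2,x_1^2,\ldots,x_n^2$ (or otherwise manifestly nonnegative), which is routine.
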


\begin{proof}[Sketch of proof]
By Theorem~\ref{T:DPRM},
we can find a polynomial $p(t,\vec{x}) \in \Z[t,x_1,\ldots,x_n]$ 
defining a diophantine subset $A$ of $\Z$ that is not computable.
A little analysis shows that there is another polynomial 
$G \in \Z[t,x_1,\ldots,x_n]$ whose values are positive and
growing so quickly that if we define 
\[
	L_a(\vec{x}) \colonequals 
	-2 + 4 p(a,\vec{x})^2 + G(a,\vec{x}) \sum_{i=1}^n \sin^2 \pi x_i,
\]
then $L_a(\vec{x}) \le 1$
holds only in tiny neighborhoods of the integer solutions to $p(a,\vec{x})=0$.
If $a \in A$, then such integer solutions exist 
and $L_a$ takes the value $-2$ at those integer solutions
and large positive values at some points with half-integer coordinates;
otherwise, $L_a(\vec{x}) > 1$ on $\R^n$.
\end{proof}

\begin{remark}
Richardson's original statement and proof of 
Theorem~\ref{T:everywhere nonnegative}
were slightly more involved because they came
before Hilbert's tenth problem had been proved undecidable.
Richardson instead had to use the undecidability result
for exponential diophantine equations mentioned in Section~\ref{S:H10}.
\end{remark}

M.~Laczkovich~\cite{Laczkovich2003} found a variant of 
Theorem~\ref{T:everywhere nonnegative} 
letting one use $\sin x_i$ in place of $\sin \pi x_i$ for all $i$.
Also, there exist functions $h \colon \R \to \R^n$
with dense image, such as
\[
	h(x) \colonequals (x \sin x, x \sin x^3, \ldots, x \sin x^{2n-1})
\]
(this function, used by 
J.~Denef and L.~Lipshitz in~\cite{Denef-Lipshitz1989}*{Lemma~3.2}, 
is a simpler version of one used in~\cite{Richardson1968}*{\S1,~Theorem~Two}).
By composing a multivariable function with $h$,
one obtains analogues of Theorem~\ref{T:everywhere nonnegative} 
for functions of \emph{one} variable:

\begin{theorem}[cf.~\cite{Laczkovich2003}*{Theorem~1}, 
which improves upon~\cite{Richardson1968}*{Corollary to Theorem Two}]
Let $\mathscr{S}$ be the set of functions $\R \to \R$ 
built up from integers and $x$ using addition, multiplication,
and composing with $\sin$.
Then it is impossible to decide, given $f \in \mathscr{S}$,
whether $f$ is everywhere nonnegative.
Deciding whether $f$ is everywhere positive
or whether $f$ has a zero are impossible too.
\end{theorem}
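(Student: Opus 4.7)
The plan is to reduce from the multivariable statement to a single-variable one by composing with the dense-image curve
\[
h(x) = (x \sin x,\; x \sin x^3,\; \ldots,\; x \sin x^{2n-1})
\]
mentioned in the paragraph preceding the theorem. I would begin by invoking Laczkovich's variant of Theorem~\ref{T:everywhere nonnegative}: for some $n$ there is a polynomial $P \in \Z[t, x_1, \ldots, x_n, y_1, \ldots, y_n]$ such that for each $a \in \Z$ the function
\[
F_a(\vec{x}) \colonequals P(a, x_1, \ldots, x_n, \sin x_1, \ldots, \sin x_n)
\]
on $\R^n$ is either everywhere greater than $1$ (case A) or else takes values less than $-1$ and values greater than $1$ (case B), with no algorithm able to decide, given $a$, which case holds.

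Next, for each $a \in \Z$ I would form $f_a(x) \colonequals F_a(h(x))$, a function of one real variable. Expanding, $f_a$ is a polynomial expression over $\Z$ in $a$, in the $n$ quantities $x \sin x^{2i-1}$, and in the $n$ quantities $\sin(x \sin x^{2i-1})$. Each monomial $x^{2i-1}$ is built from $x$ by iterated multiplication, so $\sin x^{2i-1}$ lies in $\mathscr{S}$, then $x \sin x^{2i-1} \in \mathscr{S}$, then $\sin(x \sin x^{2i-1}) \in \mathscr{S}$, and polynomial combinations stay in $\mathscr{S}$. Hence $f_a \in \mathscr{S}$ and the map $a \mapsto f_a$ is computable.

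The key external input is that $h(\R)$ is dense in $\R^n$, which is precisely the content of the lemma from~\cite{Denef-Lipshitz1989} cited above. Granting this, in case A we have $f_a(x) > 1$ for all $x \in \R$, so $f_a$ is everywhere positive, everywhere nonnegative, and has no zero. In case B, pick $\vec{u}, \vec{v} \in \R^n$ with $F_a(\vec{u}) < -1$ and $F_a(\vec{v}) > 1$; by continuity of $F_a$ these strict inequalities persist on open neighborhoods of $\vec{u}$ and $\vec{v}$, and density of $h(\R)$ supplies $x_1, x_2 \in \R$ with $f_a(x_1) < -1$ and $f_a(x_2) > 1$; the intermediate value theorem then forces a zero of $f_a$, and $f_a$ is neither everywhere nonnegative nor everywhere positive. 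Thus any algorithm that decided, for arbitrary input $f \in \mathscr{S}$, everywhere nonnegativity (respectively everywhere positivity, respectively absence of a zero) would, when applied to $f_a$, distinguish cases A and B and therefore contradict the undecidability from Laczkovich's variant.

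The main obstacle is the density of $h(\R)$ in $\R^n$: a Kronecker/Weyl-type equidistribution statement asserting that one can simultaneously force each $x \sin x^{2i-1}$ to be arbitrarily close to any prescribed $y_i$ by taking $x$ large and choosing it so that the residues of $x, x^3, \ldots, x^{2n-1}$ modulo $\pi$ land near prescribed values. I would quote it from the references given immediately before the theorem statement rather than reprove it, so that the argument above becomes essentially a transport of the multivariable undecidability to one variable via a single density fact.
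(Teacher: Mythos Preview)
Your proposal is correct and follows exactly the approach the paper indicates: invoke Laczkovich's variant of Theorem~\ref{T:everywhere nonnegative} (with $\sin x_i$ in place of $\sin \pi x_i$), compose with the dense-image curve $h$, and use density plus the intermediate value theorem to transfer the dichotomy to a single variable. The paper does not spell out the details beyond the sentence preceding the theorem, but your writeup fills them in accurately, including the verification that $f_a \in \mathscr{S}$ and the observation that each of the three decision problems would separate case~A from case~B.
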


For later use, we record the fact that 
there exist functions $F_a \in \mathscr{S}$, depending in a computable
way on an integer parameter $a$,
such that either $F_a(x)>1$ on $\R$, 
or else $F_a(x)$ assumes values less than $-1$ and values greater than~$1$,
but it is impossible to decide which.

\subsection{Equality of functions}

Automatic homework graders sometimes need to decide whether
two expressions define the same function.
But deciding whether $|f(\vec{x})|$ is the same function as $f(\vec{x})$
amounts to deciding whether $f(\vec{x})$ is everywhere nonnegative,
which, by Section~\ref{S:adjoining sin},
is impossible for $f \in \Z[x_1,\ldots,x_n,\sin x_1,\ldots, \sin x_n]$
or for $f \in \mathscr{S}$ 
(cf.~\cite{Richardson1968}*{\S2,~Theorem~Two}).

For further undecidability results in analysis deduced from
the negative answer to Hilbert's tenth problem,
see \cites{Adler1969,Denef-Lipshitz1989,Stallworth-Roush1997}.

\subsection{Integration}

There exists an entire function on $\C$ whose derivative is $e^{z^2}$.
But work of J.~Liouville shows 
that no such function is expressible by an elementary formula, 
in the following sense.

For a connected open subset $U$ of $\C$, let $\calM(U)$
be the field of meromorphic functions on $U$.
Say that a function $g \in \calM(U)$ 
is \defi{elementary} if it belongs to the last field $K_n$
in a tower 
$\C(z) = K_0 \subset K_1 \subset \cdots \subset K_n$
of subfields of $\calM(U)$
such that each extension $K_{i+1}$ over $K_i$
is either algebraic or obtained by adjoining to $K_i$
either $e^f$ or a branch of $\log f$ defined on $U$ 
for some $f \in K_i$.
For instance, the trigonometric functions and their inverses
on a suitable $U$ are elementary functions.

Liouville proved a general theorem~\cite{Liouville1835}*{\S VII}
that implies that 
there is no elementary antiderivative of $e^{z^2}$ on any $U$.
(Earlier, Liouville proved that certain algebraic functions,
such as $(1+x^4)^{-1/2}$, 
have no elementary antiderivative~\cite{Liouville1833}.)
See~\cite{Rosenlicht1972} for an account of Liouville's methods.

Can one decide whether a given elementary function 
has an elementary antiderivative?
Building on the work of Liouville, 
R.~Risch sketched a positive answer to a precise version 
of this question~\cite{Risch1970}.
To obtain a positive answer, the question must be formulated carefully
to avoid having to answer questions about whether a constant 
or function is identically $0$.
For example, it is not clear whether we can decide, 
given rational numbers $r,s,t$, whether $\int (e^{e^r}+e^s+t) e^{x^2} \, dx$
is an elementary function.

Risch avoids this difficulty 
by restricting attention to functions in a tower of fields
in which the constant field is an algebraically closed field
of characteristic $0$ with a specified finite transcendence basis,
and in which each successive extension in the tower 
is either an explicit algebraic extension
or an extension adjoining $\exp f$ or a branch of $\log f$
that does not change the field of constants.

\begin{remark}
If we try to generalize by allowing expressions involving
the absolute value function $|\;|$, we encounter undecidability,
as we now explain (cf.~\cite{Richardson1968}*{\S2,~Theorem~Three}).
Define
\begin{equation}
  \label{E:sigma}
	\sigma(x) \colonequals 
	\frac12 \left( \left| x \right| - \left| x-1 \right| + 1 \right) 
	=
        \begin{cases}
          0, & \textup{ if $x \le 0$} \\
	  x, & \textup{ if $0 < x < 1$} \\
	  1, & \textup{ if $x \ge 1$.}
        \end{cases}
\end{equation}
Recall the functions $F_a$ at the end of Section~\ref{S:adjoining sin}.
Then $\sigma(-F_a(x))$ is either $0$ on all of $\R$,
or it agrees with $1$ on some open interval, 
but we cannot decide which.
Thus we cannot decide whether 
$\int \sigma(-F_a(x)) e^{x^2} \, dx$ 
is an elementary function on all of $\R$.
\end{remark}

\begin{remark}
Deciding whether an improper integral converges is undecidable too,
as was observed by P.~Wang~\cite{Wang1974}.
Specifically, we cannot decide whether
\[
	\int_{-\infty}^\infty \frac{1}{(x^2+1) F_a(x)^2} \, dx
\]
converges.
\end{remark}

\subsection{Differential equations}

Consider \defi{algebraic differential equations (ADEs)}
\[
	P(x,y,y',y'',\ldots,y^{(n)})=0
\]
to be solved by a function $y$ of $x$,
where $P$ is a polynomial with integer coefficients.
Denef and Lipshitz~\cite{Denef-Lipshitz1989}*{Theorem~4.1} 
proved that the following problem is undecidable:

\newproblem{ADEpp}{Existence of solutions to algebraic differential equations}
{$P \in \Z[x,y_1,y_2,\ldots,y_n]$}
{Does $P(x,y',y'',\ldots,y^{(n)})$ admit a real analytic solution on $[0,\infty)$?}

It remains undecidable even if one restricts the input 
so as to allow only ADEs that have a unique analytic
solution in a neighborhood of $0$.
The idea of the proof is to consider a function built up using $\sin$ 
as in Section~\ref{S:adjoining sin}
for which one cannot decide whether it is either everywhere positive, 
and then to show that its reciprocal satisfies an ADE.

\begin{remark}
To avoiding having to use $\pi$ in the coefficients of $P$,
Denef and Lipschitz observed that $\tan^{-1} x$ is a function
satisfying an ADE such that $\lim_{x \to +\infty} \tan^{-1} x = \pi/2$.
An alternative would be to use the approach of~\cite{Laczkovich2003}
for eliminating $\pi$.
\end{remark}

\begin{remark}
ADEs can behave strangely in other ways too.
L.~Rubel~\cite{Rubel1981} constructed a single explicit ADE 
whose solutions approximate any continuous function:
more precisely, for any continuous functions $f \colon \R \to \R$
and $\epsilon \colon \R \to \R_{>0}$,
there exists a $C^\infty$ solution $g \colon \R \to \R$ 
to the ADE satisfying $|g(x)-f(x)| < \epsilon(x)$ for all $x \in \R$.
\end{remark}

For other results and questions concerning existence and computability 
of solutions to differential equations,
see \cites{Jaskowski1954,Adler1969,Aberth1971,Pour-El-Richards1979,Pour-El-Richards1983,Rubel1983,Denef-Lipshitz1984,Rubel1992}.

\section{Dynamical systems}

Many nonlinear dynamical systems 
are capable of simulating universal Turing machines,
and hence they provide undecidable problems.

\subsection{Dynamical systems on $\R^n$}

Call a map $\R^n \to \R^m$ \defi{affine linear} if it is 
a linear map plus a constant vector.
Call a map $f \colon \R^n \to \R^m$ \defi{piecewise affine linear}
if $\R^n$ can be partitioned into finitely many subsets $U_i$ 
each defined by a finite number of affine linear inequalities
such that $f|_{U_i}$ agrees with an affine linear map depending on $i$.
Call such a map \defi{rational} if all the coefficients of the 
affine linear polynomials involved are rational.
Given such a map $f$, let $f^k$ be its $k^{\tH}$ iterate.
C.~Moore~\cite{Moore1990} proved that the following problem is undecidable:

\newproblem{Moorepp}{Point goes to origin in finite time}
{a rational piecewise affine linear map $f \colon \R^2 \to \R^2$ and a point $\vec{a} \in \Q^2$}
{Does there exist $k$ such that $f^k(\vec{a}) = \vec{0}$?}

Similarly, 
H.~Siegelmann and E.~Sontag proved that 
neural nets can simulate a universal Turing machine:
in particular, if $\sigma \colon \R^n \to \R^n$ is the map
obtained by applying the function~\eqref{E:sigma} coordinatewise,
then there exists $n$ and a specific matrix $A \in M_n(\Z)$,
for which it is impossible to decide, 
given a starting point $\vec{a} \in \Q^n$,
whether some iterate of $\sigma(A \vec{x})$ maps $\vec{a}$ 
to $\vec{0}$~\cite{Siegelmann-Sontag1995}.

Instead of asking about the trajectory of one point,
one can ask global questions about the dynamical system,
such as whether every trajectory converges.
V.~Blondel, O.~Bournez, P.~Koiran, and J.~Tsitsiklis
prove that many such questions are undecidable 
for piecewise affine linear maps~\cite{Blondel-et-al2001}.

For further results relating dynamical systems and computability, 
see the survey article \cites{Blondel-Tsitsiklis2000}.

\subsection{Dynamical systems on the set of positive integers}
\label{S:Collatz}

There are also undecidable problems concerning dynamics of
maps $f \colon \Z_{>0} \to \Z_{>0}$ 
such as 
\[
	f(x) \colonequals
\begin{cases}
  3x+1, &\textup{ if $x$ is odd} \\
  x/2, &\textup{ if $x$ is even.}
\end{cases}
\]
The \defi{Collatz $3x+1$ problem}, which asks
whether for every $n \in \Z_{>0}$, there exists $k$
such that $f^k(n)=1$, has been open since the 1930s~\cite{Lagarias1985};
see~\cite{Lagarias2010} for a recent compilation of articles
on this subject.
The following generalization has been proved undecidable:

\newproblem{Collatzpp}{Generalized Collatz problem}
{$m \in \Z_{>0}$, $a_0,\ldots,a_{m-1},b_0,\ldots,b_{m-1} \in \Q$ such that
the function $f$ given by $f(x) = a_i x + b_i$ for $x \bmod m = i$
maps $\Z_{>0}$ to itself}
{Is it true that for every $n \in \Z_{>0}$ there exists $k$
such that $f^k(n)=1$?}

For this and related results, see the papers by
J.~Conway~\cite{Conway1972},
by S.~Kurtz and J.~Simon~\cite{Kurtz-Simon2007},
and by J.~Endrullis, C.~Grabmayer, 
and D.~Hendriks~\cite{Endrullis-Grabmayer-Hendriks2009}.

\section{Probability}

Consider a random walk on the set $(\Z_{\ge 0})^n$ 
of lattice points in the nonnegative orthant.
At each time, the walker takes a step by adding a vector in $\{-1,0,1\}^n$.
If $\vec{x}=(x_1,\ldots,x_n)$ is the current position,
the vector to add is chosen with respect to a probability distribution 
$\Lambda_S$ depending only on the set $S = \{i : x_i \ne 0\}$,
and $\Lambda_S$ is such that the walker never leaves the orthant.
Suppose also that every probability in the description of each $\Lambda_S$ 
is in $\Q$.
Say that the random walk starting at $\vec{a}$ is \defi{stable} 
if there exists $C>0$
such that with probability~$1$ the walker returns to 
$\{\vec{x} : |\vec{x}| < C\}$ infinitely often.

\newproblem{randomwalkpp}{Stability of random walks}
{$n \in \Z_{\ge 0}$, probability distributions $\Lambda_S$ as above for $S \subseteq \{1,\ldots,n\}$, and $\vec{a} \in (\Z_{\ge 0})^n$}
{Is the random walk starting at $\vec{a}$ stable?}

D.~Gamarnik~\cite{Gamarnik2002} proved that this problem is undecidable
even if all the probabilities are $0$ or $1$!
To do this, he showed that any Turing machine could be simulated by 
such a deterministic walk.

Moreover, many basic questions about the stationary distribution of a
random walk as above turn out to be undecidable, even if one assumes
that the stationary distribution exists~\cite{Gamarnik2007}.

\section{Algebraic geometry}

\subsection{Rational sections}

K.H.~Kim and F.W.~Roush proved the undecidability of 
Hilbert's tenth problem for the field $\C(t_1,t_2)$
of rational functions in two variables~\cite{Kim-Roush1992}.
(Strictly speaking, one should assume that the input
has coefficients in $\Qbar(t_1,t_2)$
instead of $\C(t_1,t_2)$, for the sake of encoding it for
input into a Turing machine, but we will ignore this subtlety
from now on.)
By the same argument as in Section~\ref{S:Q}, 
Hilbert's tenth problem for $\C(t_1,t_2)$ is equivalent to the problem
of deciding whether 
a $\C(t_1,t_2)$-variety over has a $\C(t_1,t_2)$-point
(any pair of equations $f=g=0$ can be converted to $f^2+t_1 g^2=0$).
K.~Eisentr\"ager~\cite{Eisentraeger2004}, 
using work of L.~Moret-Bailly~\cite{Moret-Bailly2005},
generalized the Kim--Roush result to
the function field of any fixed irreducible $\C$-variety $S$ 
of dimension at least $2$.
Whether Hilbert's tenth problem for $\C(t)$ 
is undecidable is an open question,
studied in J.~Koll\'ar's article~\cite{Kollar2008}.
It is also open for the function field of each other curve over $\C$.

Let us return to the Kim--Roush result.
By interpreting the ``constants'' $t_1$ and $t_2$ as variables,
one can associate to each $\C(t_1,t_2)$-variety $X$
a $\C$-variety $Y$ 
equipped with a rational map $\pi \colon Y \dashrightarrow \Aff^2$.
The $\C(t_1,t_2)$-points of $X$ correspond to
\defi{rational sections} of $\pi$,
i.e., rational maps $s \colon \Aff^2 \dashrightarrow Y$
such that $\pi \circ s$ is the identity.
This dictionary translates the Kim--Roush result 
into the undecidability of the following problem:

\newproblem{kimroushpp}{Existence of rational sections}
{a $\C$-variety $Y$ and a rational map $\pi \colon Y \dashrightarrow \Aff^2$}
{Does $\pi$ admit a rational section?}

\subsection{Automorphisms}

Using the undecidability of Hilbert's tenth problem,
one can show that it is impossible
to decide, given a variety $X$, a point $x \in X$, 
and a subvariety $Z \subset X$,
whether there exists an automorphism of $X$ mapping $x$ 
into $Z$~\cite{Poonen2011-automorphism}.
In fact, there are fixed $X$ and $x$ for which the problem
for a variable input $Z$ is undecidable.
More precisely, 
there is a smooth projective geometrically irreducible
$\Q$-variety $X$ and a point $x \in X(\Q)$ 
such that the following problem is undecidable:

\newproblem{poonenpp}{Automorphisms mapping a point into a subvariety}
{a smooth projective geometrically irreducible subvariety $Z \subset X$}
{Does there exist an automorphism of $X$ mapping $x$ into $Z$?}

Moreover, $X$ can be chosen so that all its automorphisms
over any field extension are already defined over $\Q$,
so it does not matter whether we require the automorphisms to be defined
over the base field.

On the other hand, the following question has remained open:

\begin{question}
\label{Q:automorphism}
Is there an algorithm to decide 
whether a given variety has a nontrivial automorphism?
\end{question}

Possibly related to this is the following:

\begin{question}
\label{Q:prescribed automorphism group}
Given an f.p.\ group $G$, can one effectively construct a variety $X_G$
whose automorphism group is $G$?
\end{question}

A positive answer to Question~\ref{Q:prescribed automorphism group}
would yield a negative answer to Question~\ref{Q:automorphism},
since it is impossible to decide whether an f.p.\ group is trivial
(Corollary~\ref{C:trivial group}).

\subsection{Isomorphism}

Given the undecidability of the homeomorphism problem
for manifolds, it is natural to ask for the algebraic geometry analogue:

\newproblem{varietyisomorphismpp}{Variety isomorphism problem}
{two varieties $X$ and $Y$ over $\Qbar$}
{Is $X \isom Y$?}

The question of whether this problem might be undecidable
was asked to the author by B.~Totaro in 2007.

We stated the problem over $\Qbar$, because most algebraic geometry
is done over an algebraically closed field and we wanted the input
to admit a finite description.
Alternatively, we could work over an algebraically closed field
$\overline{\Q(t_1,t_2,\ldots)}$ 
of countable transcendence degree over $\Q$; 
this would capture the essence of the problem over $\C$, 
since any pair of varieties over $\C$ may be simultaneously
defined over a finitely generated subfield of $\C$ 
and the existence of an isomorphism is unaffected by enlarging the
ground field from one algebraically closed field to another.
One could also consider other fields, such as $\Q$, $\Fbar_p$, $\F_p$,
or $\overline{\F_p(t_1,t_2,\ldots)}$.
There is no field over which it is known 
whether one can solve the variety isomorphism problem.

\subsection{Birational equivalence}

It is also unknown whether there is an algorithm
to decide whether two given varieties are birationally equivalent.
On the other hand, given an explicit rational map,
one can decide whether it is a birational map,
and whether it is an isomorphism.

\begin{remark}
For algebraic geometers:
if at least one of the varieties $X$ and $Y$ over $\Qbar$ is of general type,
then the set of birational maps $X \dashrightarrow Y$
is finite and computable (see below),
and we can decide which of these birational maps are isomorphisms,
and hence solve the variety isomorphism problem in this restricted setting.
H.~Matsumura proved that the 
birational automorphism group of a variety of general type
is finite~\cite{Matsumura1963}.  
The set of birational maps $X \dashrightarrow Y$ is either
empty or a principal homogeneous space under this group,
so it is finite too.
Let us sketch an algorithm for computing this set.
For $n=1,2,\ldots$, 
compute the maps determined by the pluricanonical linear system $|nK|$
for $X$ and $Y$ until an $n$ is found for which at least one 
of the two maps is birational onto its image.
Then the other must be too and the linear systems must have the same
dimension, say $n$, since otherwise we know already that $X \not\isom Y$.
The birational maps are then in bijection with the linear automorphisms
of $\PP^n$ mapping one canonical image to the other,
and we can find equations for the locus of these automorphisms as
a (finite) subscheme of $\PGL_{n+1}$.
\end{remark}

\section{Algebra}

\subsection{Commutative algebra}

If we restrict the variety isomorphism problem to 
the category of \emph{affine} varieties, 
we obtain, equivalently, the isomorphism problem for
finitely generated $\Qbar$-algebras.
Here each algebra can be presented as
$\Qbar[x_1,\ldots,x_n]/(f_1,\ldots,f_m)$
by specifying $n$ and explicit polynomials $f_1,\ldots,f_m$.

One can replace $\Qbar$ by other rings of constants 
(whose elements can be encoded for computer input).
For example, taking $\Z$ yields the following problem:

\newproblem{algebraisomorphismpp}{Commutative ring isomorphism problem}
{two finitely generated commutative rings $A$ and $B$}
{Is $A \isom B$?}

The undecidability status of this problem is unknown.
In fact, the status is unknown also for the isomorphism problem
for finitely generated commutative algebras 
over any fixed nonzero commutative ring (with elements encoded
such that addition and multiplication are computable).

\subsection{Noncommutative algebra}

The noncommutative analogue of the previous problem is undecidable,
as we now explain.
Let $\Z\langle x_1,\ldots,x_n \rangle$
be the noncommutative polynomial ring (free associative algebra with $1$)
in $n$ variables over $\Z$.
A (possibly noncommutative) \defi{f.p.\ $\Z$-algebra}
is the quotient of $\Z\langle x_1,\ldots,x_n \rangle$
by the $2$-sided ideal generated by a finite list of elements $f_1,\ldots,f_m$.
For an f.p.\ group $G$, 
the group ring $\Z G$ is an f.p.\ $\Z$-algebra, 
and $\Z G \isom \Z$ if and only if $G \isom \{1\}$.
So if there were an algorithm to decide whether two f.p.\ $\Z$-algebras
were isomorphic, we could use it to decide whether an f.p.\ group
is trivial, contradicting Corollary~\ref{C:trivial group}.

For other undecidable problems concerning noncommutative f.p.\ algebras,
see~\cite{Anick1985}.

\section{Games}

\subsection{Abstract games}

Given $m \ge 1$ and a computable function 
$W \colon \N^m \to \{\textup{A},\textup{B}\}$,
consider the two-player game of no chance in which
\begin{itemize}
\item the players (A and B) alternately choose natural numbers, starting with A, and ending after $m$ numbers $x_1,\ldots,x_m$ have been chosen;
\item there is perfect information (both players know the rules and
can see all previously made choices); and
\item the winner is $W(x_1,x_2,\ldots,x_m)$.
\end{itemize}
Many games can be fit into this framework.

A result of L.~Kalm\'ar~\cite{Kalmar1928},
building on work of 
E.~Zermelo~\cite{Zermelo1913} and D.~K\"onig~\cite{Konig1927},
states that exactly one of the two players has a winning strategy.
But:

\begin{theorem}
\label{T:who wins}
It is impossible to decide, given $m$ and $W$,
\emph{which} player has a winning strategy.
\end{theorem}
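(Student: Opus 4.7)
The plan is to reduce the halting problem, proved undecidable in Section~\ref{S:halting problem}, to the problem of determining the winner of an $(m,W)$-game. Given a program $p$ (encoded as a natural number), the goal is to construct, uniformly in $p$, a pair $(m_p, W_p)$ with $m_p \in \Z_{\ge 1}$ and a code for a Turing machine computing $W_p \colon \N^{m_p} \to \{\textup{A}, \textup{B}\}$, such that player A has a winning strategy in the resulting game if and only if $p$ halts on empty input.

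The simplest implementation takes $m_p = 2$ and defines
\[
	W_p(x_1, x_2) \colonequals
	\begin{cases}
	  \textup{A}, & \text{if $p$ halts on empty input in at most $x_1$ steps,} \\
	  \textup{B}, & \text{otherwise.}
	\end{cases}
\]
Since $W_p$ does not depend on $x_2$, player A wins regardless of B's reply exactly when she can choose $x_1$ so that $p$ halts within $x_1$ steps; hence A has a winning strategy if and only if $p$ halts. The function $W_p$ is computable (simulate $p$ for $x_1$ steps and check whether it halted), and a code for a Turing machine computing $W_p$ is produced from $p$ by a routine uniform procedure (formally, the s-m-n theorem).

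Composing this reduction with a hypothetical algorithm for Theorem~\ref{T:who wins} would yield an algorithm deciding whether an arbitrary program $p$ halts, contradicting the undecidability of the halting problem. I do not anticipate a substantive obstacle: the theorem is essentially a repackaging of the halting problem as a game. If one insists that the reduction exploit genuine two-player interaction rather than leaving B's move irrelevant, one can instead have B offer a bounded witness that $p$ does \emph{not} halt in the allotted number of steps and adjust $W_p$ accordingly, but such embellishments are cosmetic and do not affect the core argument.
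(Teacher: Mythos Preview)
Your proposal is correct and follows essentially the same reduction as the paper: A picks a step bound $x_1$ and wins precisely when $p$ halts within $x_1$ steps, so deciding the winner amounts to deciding halting. The only cosmetic difference is that the paper takes $m=1$ (a one-move game) rather than your $m=2$ with B's move ignored.
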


\begin{proof}
Given a program $p$, consider the one-move game
in which A chooses a positive integer $x_1$
and wins if $p$ halts within the first $x_1$ steps.
Player~A has a winning strategy if and only if $p$ halts,
which is undecidable.
\end{proof}

More surprising is the following result of Rabin~\cite{Rabin1957}:

\begin{theorem}
There is a three-move game in which B has a winning strategy,
but not a \emph{computable} winning strategy (i.e., there is no computable
function of $x_1$ that is a winning move $x_2$ for B).
\end{theorem}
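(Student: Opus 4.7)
The plan is to design a computable winner function $W \colon \N^3 \to \{\text{A},\text{B}\}$ in which B's choice of $x_2$ in response to a move $x_1$ by A implicitly encodes whether the program coded by $x_1$ halts on input $x_1$. Since the halting problem is undecidable (Section~\ref{S:halting problem}), any computable strategy for B would give a decision procedure for halting and hence cannot exist; yet noncomputably, B can always respond correctly.

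Concretely, interpret $x_1$ as (the code of) a program, and declare $W(x_1,x_2,x_3)=\text{B}$ precisely when one of the following holds:
\begin{itemize}
\item $x_2=0$ and program $x_1$ does not halt on input $x_1$ within $x_3$ steps; or
\item $x_2>0$ and program $x_1$ halts on input $x_1$ within $x_2-1$ steps.
\end{itemize}
This $W$ is computable because both conditions can be checked by running $x_1$ on $x_1$ for a bounded number of steps determined by the input. Note that A moves first, B second, A last, so this is a three-move game in the sense of the preceding setup.

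Next I would verify that B has a winning strategy. If program $x_1$ never halts on input $x_1$, let B play $x_2=0$; then for every $x_3$ the first clause applies, so B wins. If program $x_1$ halts on input $x_1$ in $h(x_1)$ steps, let B play any $x_2>h(x_1)$; then for every $x_3$ the second clause applies, so B wins. Thus the strategy $\sigma(x_1)$ defined to be $0$ in the non-halting case and $h(x_1)+1$ in the halting case is a winning strategy.

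The final and only substantive step is to show that no winning strategy for B can be computable. If $f \colon \N \to \N$ were a computable winning strategy, then for each $x_1$ the value $f(x_1)$ must be a winning response; inspecting the two clauses, $f(x_1)=0$ forces program $x_1$ not to halt on $x_1$ (otherwise A could pick $x_3$ large enough to see the halt and win), while $f(x_1)>0$ forces program $x_1$ to halt on $x_1$ in at most $f(x_1)-1$ steps. Hence $f(x_1)=0$ if and only if program $x_1$ fails to halt on input $x_1$, so $f$ would decide the halting problem, contradicting the theorem of Section~\ref{S:halting problem}. The main point of care is just in arranging $W$ so that B's winning responses bifurcate cleanly between the halting and non-halting cases; the construction above is designed precisely to force this dichotomy.
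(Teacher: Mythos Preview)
Your proof is correct but follows a genuinely different route from the paper's. The paper invokes Post's theorem that a \emph{simple set} exists (a c.e.\ set $S$ whose complement is infinite but contains no infinite c.e.\ subset), takes a computable enumeration $g$ of $S$, and declares A the winner iff $x_1+x_2=g(x_3)$; B wins by choosing $x_2$ so that $x_1+x_2\in\overline{S}$, and any computable strategy $w$ would make $\{x_1+w(x_1)\}$ an infinite c.e.\ subset of $\overline{S}$, contradicting simplicity. Your construction instead bypasses simple sets entirely and reduces directly to the halting problem by forcing B's winning move to encode, via the dichotomy $x_2=0$ versus $x_2>0$, whether program $x_1$ halts on itself.

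Your approach is more elementary in that it uses only the undecidability of the halting problem (already established in Section~\ref{S:halting problem}) rather than the additional existence theorem for simple sets. The paper's game, on the other hand, has a winner function of a particularly clean algebraic form---just an equality test $x_1+x_2=g(x_3)$---which connects nicely to Jones's polynomial games mentioned in the subsequent remark. Both arguments hinge on the same structural idea: design $W$ so that B's winning responses are so tightly constrained that any computable selector would compute something known to be noncomputable.
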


\begin{proof}
Post~\cite{Post1944}*{\S5} proved that there exists a \defi{simple set},
i.e., a c.e.\ set $S \subset \N$ 
whose complement $\overline{S}$ is infinite but contains no infinite c.e.\ set.
Fix such an $S$.
Let $g \colon \N \to \N$ be a computable function with $g(\N)=S$.
Consider the three-move game in which A wins if and only if $x_1+x_2=g(x_3)$.

Player~B's winning strategy is to find $t \in \overline{S}$ with $t>x_1$,
and to choose $x_2=t-x_1$.
A computable winning strategy $x_2=w(x_1)$, however, 
would yield an infinite c.e.\ subset $\{x_1+w(x_1) : x_1 \in \N\}$ 
of $\overline{S}$.
\end{proof}

\begin{remark}
Using the undecidability of Hilbert's tenth problem, 
J.P.~Jones gave new proofs of these theorems 
using games in which $W$ simply evaluates a 
given polynomial at the $m$-tuple of choices to decide 
who wins~\cite{Jones1982}.
\end{remark}

R.~Hearn~\cite{Hearn-thesis} proved that 
team games with imperfect information
can be undecidable even if they have only \emph{finitely} many positions!
For an account of this work and a complexity analysis of many finite games,
see~\cite{Hearn-Demaine2009}.

\subsection{Chess}

R.~Stanley~\cite{Stanley2010mo} 
asked whether the following problem is decidable:

\newproblem{chesspp}{Infinite chess}
{A finite list of chess pieces and their starting positions on a $\Z \times \Z$ chessboard}
{Can White force mate?}

\noindent 
(For a precise specification of the rules, and for related problems,
see~\cite{Brumleve-Hamkins-Schlicht-preprint}.)

It is unknown whether this problem is decidable.
On the other hand, D.~Brumleve, J.~Hamkins, and 
P.~Schlicht~\cite{Brumleve-Hamkins-Schlicht-preprint} 
showed that one \emph{can} decide whether White can mate in $n$ moves,
if a starting configuration and $n$ are given.
This statement can be proved quickly by encoding each instance of the problem
as a first-order sentence in \defi{Presburger arithmetic},
which is the theory of $(\N;0,1,+)$.
(Presburger arithmetic, unlike the theory of $(\N;0,1,+,\cdot)$,
is decidable~\cite{Presburger1929}.)

\section{Final remarks} 

Each undecidable problem $P$ we presented 
is at least as hard as the halting problem $H$,
because the undecidability proof ultimately depended on
encoding an arbitrary instance of $H$ as an instance of $P$.
In the other direction, many of these problems could be solved
if one could decide whether a certain search terminates;
for these $P$, 
an arbitrary instance of $P$ can be encoded as an instance of $H$.
The problems for which both reductions are possible
are called \defi{c.e.\ complete},
and they are all of exactly the same difficulty
with respect to Turing reducibility.
For example, an algorithm for deciding whether a finitely
presented group is trivial could be used to decide whether multivariable
polynomial equations have integer solutions, and vice versa!

On the other hand, certain other natural problems are 
strictly harder than the halting problem.
One such problem is the generalized Collatz problem
of Section~\ref{S:Collatz}: 
see~\cites{Kurtz-Simon2007,Endrullis-Grabmayer-Hendriks2009}.

\section*{Acknowledgements} 

I thank 
Henry Cohn, 
Martin Davis,
Jeffrey Lagarias, 
and Richard Stanley 
for discussions.

\begin{bibdiv}
\begin{biblist}


\bib{Aberth1971}{article}{
  author={Aberth, Oliver},
  title={The failure in computable analysis of a classical existence theorem for differential equations},
  journal={Proc. Amer. Math. Soc.},
  volume={30},
  date={1971},
  pages={151--156},
  issn={0002-9939},
  review={\MR {0302982 (46 \#2124)}},
}

\bib{Adler1969}{article}{
  author={Adler, Andrew},
  title={Some recursively unsolvable problems in analysis},
  journal={Proc. Amer. Math. Soc.},
  volume={22},
  date={1969},
  pages={523--526},
  issn={0002-9939},
  review={\MR {0248020 (40 \#1277)}},
}

\bib{Adyan1957a}{article}{
  author={Adyan, S. I.},
  title={Unsolvability of some algorithmic problems in the theory of groups. },
  language={Russian},
  journal={Trudy Moskov. Mat. Ob\v s\v c.},
  volume={6},
  date={1957},
  pages={231--298},
  issn={0134-8663},
  review={\MR {0095872 (20 \#2370)}},
}

\bib{Adyan1957b}{article}{
  author={Adyan, S. I.},
  title={Finitely presented groups and algorithms},
  language={Russian},
  journal={Dokl. Akad. Nauk SSSR (N.S.)},
  volume={117},
  date={1957},
  pages={9--12},
  issn={0002-3264},
  review={\MR {0095873 (20 \#2371)}},
}

\bib{Berger1966}{article}{
  author={Berger, Robert},
  title={The undecidability of the domino problem},
  journal={Mem. Amer. Math. Soc. No.},
  volume={66},
  date={1966},
  pages={72},
  issn={0065-9266},
  review={\MR {0216954 (36 \#49)}},
}

\bib{Anick1985}{article}{
  author={Anick, David J.},
  title={Diophantine equations, Hilbert series, and undecidable spaces},
  journal={Ann. of Math. (2)},
  volume={122},
  date={1985},
  number={1},
  pages={87--112},
  issn={0003-486X},
  review={\MR {799253 (87b:55008)}},
  doi={10.2307/1971370},
}

\bib{Blondel-Tsitsiklis2000}{article}{
  author={Blondel, Vincent D.},
  author={Tsitsiklis, John N.},
  title={A survey of computational complexity results in systems and control},
  journal={Automatica J. IFAC},
  volume={36},
  date={2000},
  number={9},
  pages={1249--1274},
  issn={0005-1098},
  review={\MR {1834719 (2002c:93001)}},
  doi={10.1016/S0005-1098(00)00050-9},
}

\bib{Blondel-et-al2001}{article}{
  author={Blondel, Vincent D.},
  author={Bournez, Olivier},
  author={Koiran, Pascal},
  author={Tsitsiklis, John N.},
  title={The stability of saturated linear dynamical systems is undecidable},
  journal={J. Comput. System Sci.},
  volume={62},
  date={2001},
  number={3},
  pages={442--462},
  issn={0022-0000},
  review={\MR {1824455 (2002d:03077)}},
  doi={10.1006/jcss.2000.1737},
}

\bib{Boone1959}{article}{
  author={Boone, William W.},
  title={The word problem},
  journal={Ann. of Math. (2)},
  volume={70},
  date={1959},
  pages={207--265},
  issn={0003-486X},
  review={\MR {0179237 (31 \#3485)}},
}

\bib{Brumleve-Hamkins-Schlicht-preprint}{article}{
  author={Brumleve, Dan},
  author={Hamkins, Joel David},
  author={Schlicht, Philipp},
  title={The mate-in-$n$ problem of infinite chess is decidable},
  date={2012-01-31},
  note={Preprint, \texttt {arXiv:1201.5597v2}},
}

\bib{Church1936a}{article}{
  author={Church, Alonzo},
  title={An unsolvable problem of elementary number theory},
  journal={Amer. J. Math.},
  volume={58},
  date={1936},
  pages={345--363},
}

\bib{Church1936b}{article}{
  author={Church, Alonzo},
  title={A note on the Entscheidungsproblem},
  journal={J. Symbolic Logic},
  volume={1},
  date={1936},
  pages={40--41},
}

\bib{Cohen1963}{article}{
  author={Cohen, Paul},
  title={The independence of the continuum hypothesis},
  journal={Proc. Nat. Acad. Sci. U.S.A.},
  volume={50},
  date={1963},
  pages={1143--1148},
  issn={0027-8424},
  review={\MR {0157890 (28 \#1118)}},
}

\bib{Cohen1964}{article}{
  author={Cohen, Paul J.},
  title={The independence of the continuum hypothesis. II},
  journal={Proc. Nat. Acad. Sci. U.S.A.},
  volume={51},
  date={1964},
  pages={105--110},
  issn={0027-8424},
  review={\MR {0159745 (28 \#2962)}},
}

\bib{Collins1972}{article}{
  author={Collins, Donald J.},
  title={Representation of Turing reducibility by word and conjugacy problems in finitely presented groups},
  journal={Acta Math.},
  volume={128},
  date={1972},
  number={1-2},
  pages={73--90},
  issn={0001-5962},
  review={\MR {0392539 (52 \#13356)}},
}

\bib{Conway1972}{article}{
  author={Conway, J. H.},
  title={Unpredictable iterations},
  book={ title={Proceedings of the 1972 Number Theory Conference}, note={Held at the University of Colorado, Boulder, Colo., August 14--18, 1972}, publisher={University of Colorado, Boulder, Colo.}, date={1972}, pages={ii+270}, },
  reprint={ title={The ultimate challenge: the $3x+1$ problem}, editor={Lagarias, Jeffrey C.}, publisher={American Mathematical Society}, place={Providence, RI}, date={2010}, pages={219--223}, },
  label={Con72},
}

\bib{H10-movie}{misc}{
  author={Csicsery, George},
  title={Julia Robinson and Hilbert's tenth problem},
  date={2008},
  note={Documentary, Zala Films, \url {http://zalafilms.com/films/juliarobinson.html}\phantom {i}},
}

\bib{Culik1996}{article}{
  author={Culik, Karel, II},
  title={An aperiodic set of $13$ Wang tiles},
  journal={Discrete Math.},
  volume={160},
  date={1996},
  number={1-3},
  pages={245--251},
  issn={0012-365X},
  review={\MR {1417576 (97f:05045)}},
  doi={10.1016/S0012-365X(96)00118-5},
}

\bib{Davis1953}{article}{
  author={Davis, Martin},
  title={Arithmetical problems and recursively enumerable predicates},
  journal={J. Symbolic Logic},
  volume={18},
  date={1953},
  pages={33--41},
  issn={0022-4812},
  review={\MR {0055281 (14,1052c)}},
}

\bib{Davis1958}{book}{
  author={Davis, Martin},
  title={Computability and unsolvability},
  series={McGraw-Hill Series in Information Processing and Computers},
  publisher={McGraw-Hill Book Co.},
  place={Inc., New York},
  date={1958},
  pages={xxv+210},
  review={\MR {0124208 (23 \#A1525)}},
}

\bib{Davis1977}{article}{
  author={Davis, Martin},
  title={Unsolvable problems},
  book={ title={Handbook of mathematical logic}, note={Edited by Jon Barwise; With the cooperation of H. J. Keisler, K. Kunen, Y. N. Moschovakis and A. S. Troelstra; Studies in Logic and the Foundations of Mathematics, Vol. 90}, publisher={North-Holland Publishing Co.}, place={Amsterdam}, isbn={0-7204-2285-X}, review={\MR {0457132 (56 \#15351)}}, },
  date={1977},
  pages={567--594},
}

\bib{Davis-Matiyasevich-Robinson1976}{article}{
  author={Davis, Martin},
  author={Matiyasevich, Yuri},
  author={Robinson, Julia},
  title={Hilbert's tenth problem: Diophantine equations: positive aspects of a negative solution},
  conference={ title={Mathematical developments arising from Hilbert problems (Proc. Sympos. Pure Math., Vol. XXVIII, Northern Illinois Univ., De Kalb, Ill., 1974)}, },
  book={ publisher={Amer. Math. Soc.}, place={Providence, R. I.}, },
  date={1976},
  pages={323--378. (loose erratum)},
  review={\MR {0432534 (55 \#5522)}},
}

\bib{Davis-Putnam-Robinson1961}{article}{
  author={Davis, Martin},
  author={Putnam, Hilary},
  author={Robinson, Julia},
  title={The decision problem for exponential diophantine equations},
  journal={Ann. of Math. (2)},
  volume={74},
  date={1961},
  pages={425--436},
  issn={0003-486X},
  review={\MR {0133227 (24 \#A3061)}},
}

\bib{Dehn1911}{article}{
  author={Dehn, M.},
  title={\"Uber unendliche diskontinuierliche Gruppen},
  language={German},
  journal={Math. Ann.},
  volume={71},
  date={1911},
  number={1},
  pages={116--144},
  issn={0025-5831},
}

\bib{Denef1980}{article}{
  author={Denef, J.},
  title={Diophantine sets over algebraic integer rings. II},
  journal={Trans. Amer. Math. Soc.},
  volume={257},
  date={1980},
  number={1},
  pages={227--236},
  issn={0002-9947},
  review={\MR {549163 (81b:12031)}},
  doi={10.2307/1998133},
}

\bib{Denef-Lipshitz1978}{article}{
  author={Denef, J.},
  author={Lipshitz, L.},
  title={Diophantine sets over some rings of algebraic integers},
  journal={J. London Math. Soc. (2)},
  volume={18},
  date={1978},
  number={3},
  pages={385\ndash 391},
  issn={0024-6107},
  review={\MR {518221 (80a:12030)}},
}

\bib{Denef-Lipshitz1984}{article}{
  author={Denef, J.},
  author={Lipshitz, L.},
  title={Power series solutions of algebraic differential equations},
  journal={Math. Ann.},
  volume={267},
  date={1984},
  number={2},
  pages={213--238},
  issn={0025-5831},
  review={\MR {738249 (85j:12010)}},
  doi={10.1007/BF01579200},
}

\bib{Denef-Lipshitz1989}{article}{
  author={Denef, J.},
  author={Lipshitz, L.},
  title={Decision problems for differential equations},
  journal={J. Symbolic Logic},
  volume={54},
  date={1989},
  number={3},
  pages={941--950},
  issn={0022-4812},
  review={\MR {1011182 (91b:03074)}},
  doi={10.2307/2274755},
}

\bib{H10book}{collection}{
  title={Hilbert's tenth problem: relations with arithmetic and algebraic geometry},
  series={Contemporary Mathematics},
  volume={270},
  editor={Denef, Jan},
  editor={Lipshitz, Leonard},
  editor={Pheidas, Thanases},
  editor={Van Geel, Jan},
  note={Papers from the workshop held at Ghent University, Ghent, November 2--5, 1999},
  publisher={American Mathematical Society},
  place={Providence, RI},
  date={2000},
  pages={xii+367},
  isbn={0-8218-2622-0},
  review={\MR {1802007 (2001g:00018)}},
}

\bib{Eisentraeger2004}{article}{
  author={Eisentr{\"a}ger, Kirsten},
  title={Hilbert's tenth problem for function fields of varieties over $\mathbb C$},
  journal={Int. Math. Res. Not.},
  date={2004},
  number={59},
  pages={3191--3205},
  issn={1073-7928},
  review={\MR {2097039 (2005h:11273)}},
}

\bib{Endrullis-Grabmayer-Hendriks2009}{article}{
  author={Endrullis, J{\"o}rg},
  author={Grabmayer, Clemens},
  author={Hendriks, Dimitri},
  title={Complexity of fractran and productivity},
  conference={ title={Automated deduction---CADE-22}, },
  book={ series={Lecture Notes in Comput. Sci.}, volume={5663}, publisher={Springer}, place={Berlin}, },
  date={2009},
  pages={371--387},
  review={\MR {2550348}},
}

\bib{Gamarnik2002}{article}{
  author={Gamarnik, David},
  title={On deciding stability of constrained homogeneous random walks and queueing systems},
  journal={Math. Oper. Res.},
  volume={27},
  date={2002},
  number={2},
  pages={272--293},
  issn={0364-765X},
  review={\MR {1908527 (2003c:90022)}},
  doi={10.1287/moor.27.2.272.321},
}

\bib{Gamarnik2007}{article}{
  author={Gamarnik, David},
  title={On the undecidability of computing stationary distributions and large deviation rates for constrained random walks},
  journal={Math. Oper. Res.},
  volume={32},
  date={2007},
  number={2},
  pages={257--265},
  issn={0364-765X},
  review={\MR {2324425 (2008g:60131)}},
  doi={10.1287/moor.1060.0247},
}

\bib{Godel1930}{article}{
  author={G{\"o}del, Kurt},
  title={Die Vollst\"andigkeit der Axiome des logischen Funktionenkalk\"uls},
  language={German},
  journal={Monatsh. Math. Phys.},
  volume={37},
  date={1930},
  number={1},
  pages={349--360},
  issn={0026-9255},
  review={\MR {1549799}},
  doi={10.1007/BF01696781},
  translation={ author={G{\"o}del, Kurt}, title={Collected works. Vol. I}, note={Publications 1929--1936; Edited and with a preface by Solomon Feferman}, publisher={The Clarendon Press, Oxford University Press}, place={New York}, date={1986}, pages={103--123}, isbn={0-19-503964-5}, review={\MR {831941 (87h:01096)}}, },
}

\bib{Godel1931}{article}{
  author={G{\"o}del, K.},
  title={{\"Uber formal unentscheidbare S\"atze der Principia Mathematica und verwandter System I}},
  journal={Monatshefte f\"ur Math.\ und Physik},
  volume={38},
  date={1931},
  pages={173--198},
  translation={ author={G{\"o}del, Kurt}, title={On formally undecidable propositions of \textit {Principia mathematica} and related systems}, note={Translated from the German and with a preface by B. Meltzer; With an introduction by R. B. Braithwaite; Reprint of the 1963 translation}, publisher={Dover Publications Inc.}, place={New York}, date={1992}, pages={viii+72}, isbn={0-486-66980-7}, review={\MR {1160352 (93d:01094)}}, },
  reprint={ author={G{\"o}del, Kurt}, title={Collected works. Vol. I}, note={Publications 1929--1936; Edited and with a preface by Solomon Feferman}, publisher={The Clarendon Press, Oxford University Press}, place={New York}, date={1986}, pages={144--195}, isbn={0-19-503964-5}, review={\MR {831941 (87h:01096)}}, },
}

\bib{Godel1940}{book}{
  author={G{\"o}del, Kurt},
  title={The Consistency of the Continuum Hypothesis},
  series={Annals of Mathematics Studies, no. 3},
  publisher={Princeton University Press},
  place={Princeton, N. J.},
  date={1940},
  pages={66},
  review={\MR {0002514 (2,66c)}},
}

\bib{Goodman1959}{article}{
  author={Goodman, A. W.},
  title={On sets of acquaintances and strangers at any party},
  journal={Amer. Math. Monthly},
  volume={66},
  date={1959},
  pages={778--783},
  issn={0002-9890},
  review={\MR {0107610 (21 \#6335)}},
}

\bib{Haken1961}{article}{
  author={Haken, Wolfgang},
  title={Theorie der Normalfl\"achen},
  language={German},
  journal={Acta Math.},
  volume={105},
  date={1961},
  pages={245--375},
  issn={0001-5962},
  review={\MR {0141106 (25 \#4519a)}},
}

\bib{Halava-Harju2001}{article}{
  author={Halava, Vesa},
  author={Harju, Tero},
  title={Mortality in matrix semigroups},
  journal={Amer. Math. Monthly},
  volume={108},
  date={2001},
  number={7},
  pages={649--653},
  issn={0002-9890},
  review={\MR {1862104}},
  doi={10.2307/2695274},
}

\bib{Halava-Harju-Hirvensalo2007}{article}{
  author={Halava, Vesa},
  author={Harju, Tero},
  author={Hirvensalo, Mika},
  title={Undecidability bounds for integer matrices using Claus instances},
  journal={Internat. J. Found. Comput. Sci.},
  volume={18},
  date={2007},
  number={5},
  pages={931--948},
  issn={0129-0541},
  review={\MR {2363737 (2008m:03091)}},
  doi={10.1142/S0129054107005066},
}

\bib{Halava-Harju-Hirvensalo-Karhumaki-preprint}{misc}{
  author={Halava, Vesa},
  author={Harju, Tero},
  author={Hirvensalo, Mika},
  author={Karhum\"aki, Juhani},
  title={Skolem's problem---on the border between decidability and undecidability},
  series={TUCS Technical Reports, no.~683},
  publisher={Turku Centre for Computer Science},
  date={2005-04},
  note={Available at \url {http://tucs.fi/publications/view/?pub_id=tHaHaHiKa05a}},
  eprint={http://tucs.fi/publications/view/?pub_id=tHaHaHiKa05a},
}

\bib{Hatami-Norine2011}{article}{
  author={Hatami, Hamed},
  author={Norine, Serguei},
  title={Undecidability of linear inequalities in graph homomorphism densities},
  journal={J. Amer. Math. Soc.},
  volume={24},
  date={2011},
  number={2},
  pages={547--565},
  issn={0894-0347},
  review={\MR {2748400}},
  doi={10.1090/S0894-0347-2010-00687-X},
}

\bib{Hearn-thesis}{book}{
  author={Hearn, Robert Aubrey},
  title={Games, puzzles, and computation},
  date={2006},
  note={Ph.D.\ thesis, Department of Electrical Engineering and Computer Science, Massachusetts Institute of Technology. Available at \url {http://www.swiss.ai.mit.edu/~bob/hearn-thesis-final.pdf}\phantom {i}\phantom {i}},
}

\bib{Hearn-Demaine2009}{book}{
  author={Hearn, Robert A.},
  author={Demaine, Erik D.},
  title={Games, puzzles, and computation},
  publisher={A K Peters Ltd.},
  place={Wellesley, MA},
  date={2009},
  pages={x+237},
  isbn={978-1-56881-322-6},
  review={\MR {2537584}},
}

\bib{Hemion1979}{article}{
  author={Hemion, Geoffrey},
  title={On the classification of homeomorphisms of $2$-manifolds and the classification of $3$-manifolds},
  journal={Acta Math.},
  volume={142},
  date={1979},
  number={1-2},
  pages={123--155},
  issn={0001-5962},
  review={\MR {512214 (80f:57003)}},
  doi={10.1007/BF02395059},
}

\bib{Higman1961}{article}{
  author={Higman, G.},
  title={Subgroups of finitely presented groups},
  journal={Proc. Roy. Soc. Ser. A},
  volume={262},
  date={1961},
  pages={455--475},
  review={\MR {0130286 (24 \#A152)}},
}

\bib{Hilbert-Ackermann1928}{book}{
  author={Hilbert, D.},
  author={Ackermann, W.},
  title={Grundz\"uge der theoretischen Logik},
  language={German},
  publisher={Springer-Verlag},
  place={Berlin},
  date={1928},
  pages={VIII+120},
  translation={ title={Principles of mathematical logic}, translator = {Lewis M. Hammond, George G. Leckie, F. Steinhardt}, editor = {Robert E. Luce}, publisher={Chelsea}, place={New York}, date={1950}, note={Based on second German edition of 1938}, },
}

\bib{Jacob1977}{article}{
  author={Jacob, G{\'e}rard},
  title={Un algorithme calculant le cardinal, fini ou infini, des demi-groupes de matrices},
  language={French, with English summary},
  journal={Theoret. Comput. Sci.},
  volume={5},
  date={1977/78},
  number={2},
  pages={183--204},
  issn={0304-3975},
  review={\MR {0473075 (57 \#12754)}},
}

\bib{Jacob1978}{article}{
  author={Jacob, G{\'e}rard},
  title={La finitude des repr\'esentations lin\'eaires des semi-groupes est d\'ecidable},
  language={French, with English summary},
  journal={J. Algebra},
  volume={52},
  date={1978},
  number={2},
  pages={437--459},
  issn={0021-8693},
  review={\MR {0473071 (57 \#12750)}},
}

\bib{Jaskowski1954}{article}{
  author={Ja{\'s}kowski, S.},
  title={Example of a class of systems of ordinary differential equations having no decision method for existence problems},
  journal={Bull. Acad. Polon. Sci. Cl. III.},
  volume={2},
  date={1954},
  pages={155--157},
  review={\MR {0063327 (16,103g)}},
}

\bib{Jones1982}{article}{
  author={Jones, J. P.},
  title={Some undecidable determined games},
  journal={Internat. J. Game Theory},
  volume={11},
  date={1982},
  number={2},
  pages={63--70},
  issn={0020-7276},
  review={\MR {686391 (84b:90107)}},
  doi={10.1007/BF01769063},
}

\bib{Kalmar1928}{article}{
  author={Kalm\'ar, L\'azl\'o},
  title={Zur Theorie der abstrakten Spiele},
  language={German},
  journal={Acta Sci.\ Math.\ Szeged},
  volume={4},
  date={1928/29},
  pages={65--85},
  translation={ editor={Dimand, Mary Ann}, editor={Dimand, Robert W.}, title={The foundations of game theory, volume~I}, series={Elgar Mini Series}, publisher={Edward Elgar Publishing}, date={1997}, pages={247--262}, isbn={9781858982977}, },
}

\bib{Kari1996}{article}{
  author={Kari, Jarkko},
  title={A small aperiodic set of Wang tiles},
  journal={Discrete Math.},
  volume={160},
  date={1996},
  number={1-3},
  pages={259--264},
  issn={0012-365X},
  review={\MR {1417578 (97f:05046)}},
  doi={10.1016/0012-365X(95)00120-L},
}

\bib{Kim-Roush1992}{article}{
  author={Kim, K. H.},
  author={Roush, F. W.},
  title={Diophantine undecidability of ${\mathbb C}(t\sb 1,t\sb 2)$},
  journal={J. Algebra},
  volume={150},
  date={1992},
  number={1},
  pages={35--44},
  issn={0021-8693},
  review={\MR {1174886 (93h:03062)}},
}

\bib{Klarner-Birget-Satterfield1991}{article}{
  author={Klarner, David A.},
  author={Birget, Jean-Camille},
  author={Satterfield, Wade},
  title={On the undecidability of the freeness of integer matrix semigroups},
  journal={Internat. J. Algebra Comput.},
  volume={1},
  date={1991},
  number={2},
  pages={223--226},
  issn={0218-1967},
  review={\MR {1128014 (92g:20093)}},
  doi={10.1142/S0218196791000146},
}

\bib{Kleene1936}{article}{
  author={Kleene, S. C.},
  title={General recursive functions of natural numbers},
  journal={Math. Ann.},
  volume={112},
  date={1936},
  number={1},
  pages={727--742},
  issn={0025-5831},
  review={\MR {1513071}},
  doi={10.1007/BF01565439},
}

\bib{Koenigsmann2010-preprint}{misc}{
  author={Koenigsmann, Jochen},
  title={Defining $\mathbb {Z}$ in $\mathbb {Q}$},
  date={2010-11-15},
  note={Preprint, \texttt {arXiv:1011.3424}\phantom {i}},
}

\bib{Kollar2008}{article}{
  author={Koll{\'a}r, J{\'a}nos},
  title={Diophantine subsets of function fields of curves},
  journal={Algebra Number Theory},
  volume={2},
  date={2008},
  number={3},
  pages={299--311},
  issn={1937-0652},
  review={\MR {2407117}},
}

\bib{Konig1927}{article}{
  author={K\"onig, D\'enes},
  title={\"Uber eine Schlussweise aus dem Endlichen ins Unendliche},
  language={German},
  journal={Acta Sci.\ Math.\ Szeged},
  volume={3},
  date={1927},
  pages={121--130},
}

\bib{Kurtz-Simon2007}{article}{
  author={Kurtz, Stuart A.},
  author={Simon, Janos},
  title={The undecidability of the generalized Collatz problem},
  conference={ title={Theory and applications of models of computation}, },
  book={ series={Lecture Notes in Comput. Sci.}, volume={4484}, publisher={Springer}, place={Berlin}, },
  date={2007},
  pages={542--553},
  review={\MR {2374341 (2008m:03092)}},
}

\bib{Laczkovich2003}{article}{
  author={Laczkovich, M.},
  title={The removal of $\pi $ from some undecidable problems involving elementary functions},
  journal={Proc. Amer. Math. Soc.},
  volume={131},
  date={2003},
  number={7},
  pages={2235--2240 (electronic)},
  issn={0002-9939},
  review={\MR {1963772 (2004d:03020)}},
  doi={10.1090/S0002-9939-02-06753-9},
}

\bib{Lagarias1985}{article}{
  author={Lagarias, Jeffrey C.},
  title={The $3x+1$ problem and its generalizations},
  journal={Amer. Math. Monthly},
  volume={92},
  date={1985},
  number={1},
  pages={3--23},
  issn={0002-9890},
  review={\MR {777565 (86i:11043)}},
  doi={10.2307/2322189},
}

\bib{Lagarias2010}{collection}{
  title={The ultimate challenge: the $3x+1$ problem},
  editor={Lagarias, Jeffrey C.},
  publisher={American Mathematical Society},
  place={Providence, RI},
  date={2010},
  pages={xiv+344},
  isbn={978-8218-4940-8},
  review={\MR {2663745 (2012a:11001)}},
}

\bib{LangTranscendental}{book}{
  author={Lang, Serge},
  title={Introduction to transcendental numbers},
  publisher={Addison-Wesley Publishing Co., Reading, Mass.-London-Don Mills, Ont.},
  date={1966},
  pages={vi+105},
  review={\MR {0214547 (35 \#5397)}},
}

\bib{Liouville1833}{article}{
  author={Liouville, Joseph},
  title={Premier et second memoire sur la d\'etermination des int\'egrales dont la valeur est alg\'ebrique},
  journal={J. de l'\'Ecole Polytech.},
  volume={14},
  date={1833},
  pages={124--148 and 149--193},
}

\bib{Liouville1835}{article}{
  author={Liouville, Joseph},
  title={M\'emoire sur l'int\'egration d'une classe de fonctions transcendentes},
  journal={J. reine angew. Math.},
  volume={13},
  number={2},
  date={1835},
  pages={93--118},
}

\bib{Macintyre-Wilkie1996}{article}{
  author={Macintyre, Angus},
  author={Wilkie, A. J.},
  title={On the decidability of the real exponential field},
  conference={ title={Kreiseliana}, },
  book={ publisher={A K Peters}, place={Wellesley, MA}, },
  date={1996},
  pages={441--467},
  review={\MR {1435773}},
}

\bib{Mandel-Simon1977}{article}{
  author={Mandel, Arnaldo},
  author={Simon, Imre},
  title={On finite semigroups of matrices},
  journal={Theoret. Comput. Sci.},
  volume={5},
  date={1977/78},
  number={2},
  pages={101--111},
  issn={0304-3975},
  review={\MR {0473070 (57 \#12749)}},
}

\bib{Margenstern2008}{article}{
  author={Margenstern, Maurice},
  title={The domino problem of the hyperbolic plane is undecidable},
  journal={Theoret. Comput. Sci.},
  volume={407},
  date={2008},
  number={1-3},
  pages={29--84},
  issn={0304-3975},
  review={\MR {2462998 (2009m:52040)}},
  doi={10.1016/j.tcs.2008.04.038},
}

\bib{Markov1947}{article}{
  author={Markov, A.},
  title={The impossibility of certain algorithms in the theory of associative systems. II},
  language={Russian},
  journal={Doklady Akad. Nauk SSSR (N.S.)},
  volume={58},
  date={1947},
  pages={353--356},
  review={\MR {0023208 (9,321b)}},
}

\bib{Markov1951}{article}{
  author={Markov, A.},
  title={The impossibility of certain algorithms in the theory of associative systems},
  language={Russian},
  journal={Doklady Akad. Nauk SSSR (N.S.)},
  volume={77},
  date={1951},
  pages={19--20},
  review={\MR {0040231 (12,661e)}},
}

\bib{Markov1958}{article}{
  author={Markov, A.},
  title={The insolubility of the problem of homeomorphy},
  language={Russian},
  journal={Dokl. Akad. Nauk SSSR},
  volume={121},
  date={1958},
  pages={218--220},
  issn={0002-3264},
  review={\MR {0097793 (20 \#4260)}},
}

\bib{Matiyasevich1970}{article}{
  author={Matiyasevich, Yu.},
  title={The Diophantineness of enumerable sets},
  language={Russian},
  journal={Dokl. Akad. Nauk SSSR},
  volume={191},
  date={1970},
  pages={279--282},
  issn={0002-3264},
  review={\MR {0258744 (41 \#3390)}},
}

\bib{Matiyasevich1993}{book}{
  author={Matiyasevich, Yuri V.},
  title={Hilbert's tenth problem},
  series={Foundations of Computing Series},
  note={Translated from the 1993 Russian original by the author; With a foreword by Martin Davis},
  publisher={MIT Press},
  place={Cambridge, MA},
  date={1993},
  pages={xxiv+264},
  isbn={0-262-13295-8},
  review={\MR {1244324 (94m:03002b)}},
}

\bib{Matsumura1963}{article}{
  author={Matsumura, Hideyuki},
  title={On algebraic groups of birational transformations},
  journal={Atti Accad. Naz. Lincei Rend. Cl. Sci. Fis. Mat. Natur. (8)},
  volume={34},
  date={1963},
  pages={151--155},
  review={\MR {0159825 (28 \#3041)}},
}

\bib{Mazur1994}{article}{
  author={Mazur, B.},
  title={Questions of decidability and undecidability in number theory},
  journal={J. Symbolic Logic},
  volume={59},
  date={1994},
  number={2},
  pages={353--371},
  issn={0022-4812},
  review={\MR {1276620 (96c:03091)}},
}

\bib{Mazur-Rubin2010}{article}{
  author={Mazur, Barry},
  author={Rubin, Karl},
  title={Ranks of twists of elliptic curves and Hilbert's tenth problem},
  journal={Invent. Math.},
  volume={181},
  date={2010},
  pages={541--575},
}

\bib{Miller1992}{article}{
  author={Miller, Charles F., III},
  title={Decision problems for groups---survey and reflections},
  conference={ title={Algorithms and classification in combinatorial group theory (Berkeley, CA, 1989)}, },
  book={ series={Math. Sci. Res. Inst. Publ.}, volume={23}, publisher={Springer}, place={New York}, },
  date={1992},
  pages={1--59},
  review={\MR {1230627 (94i:20057)}},
}

\bib{Moore1990}{article}{
  author={Moore, Cristopher},
  title={Unpredictability and undecidability in dynamical systems},
  journal={Phys. Rev. Lett.},
  volume={64},
  date={1990},
  number={20},
  pages={2354--2357},
  issn={0031-9007},
  review={\MR {1050259 (91b:58158)}},
  doi={10.1103/PhysRevLett.64.2354},
}

\bib{Moret-Bailly2005}{article}{
  author={Moret-Bailly, Laurent},
  title={Elliptic curves and Hilbert's tenth problem for algebraic function fields over real and $p$-adic fields},
  journal={J. reine angew. Math.},
  volume={587},
  date={2005},
  pages={77--143},
  issn={0075-4102},
  review={\MR {2186976}},
}

\bib{Nabutovsky-Weinberger1996}{article}{
  author={Nabutovsky, Alexander},
  author={Weinberger, Shmuel},
  title={Algorithmic unsolvability of the triviality problem for multidimensional knots},
  journal={Comment. Math. Helv.},
  volume={71},
  date={1996},
  number={3},
  pages={426--434},
  issn={0010-2571},
  review={\MR {1418946 (98i:57045)}},
  doi={10.1007/BF02566428},
}

\bib{Novikov1954}{article}{
  author={Novikov, P. S.},
  title={Unsolvability of the conjugacy problem in the theory of groups},
  language={Russian},
  journal={Izv. Akad. Nauk SSSR. Ser. Mat.},
  volume={18},
  date={1954},
  pages={485--524},
  issn={0373-2436},
  review={\MR {0075196 (17,706a)}},
}

\bib{Novikov1955}{book}{
  author={Novikov, P. S.},
  title={Ob algoritmi\v cesko\u \i \ nerazre\v simosti problemy to\v zdestva slov v teorii grupp},
  language={Russian},
  series={Trudy Mat. Inst. im. Steklov. no. 44},
  publisher={Izdat. Akad. Nauk SSSR},
  place={Moscow},
  date={1955},
  pages={143},
  review={\MR {0075197 (17,706b)}},
  translation={ title={On the algorithmic insolvability of the word problem in group theory}, conference={ title={American Mathematical Society Translations, Ser 2, Vol. 9}, }, book={ publisher={American Mathematical Society}, place={Providence, R. I.}, }, date={1958}, pages={1--122}, review={\MR {0092784 (19,1158b)}}, },
}

\bib{Park-preprint}{misc}{
  author={Park, Jennifer},
  title={A universal first order formula defining the ring of integers in a number field},
  date={2012-02-28},
  note={Preprint, \texttt {arXiv:1202.6371v1}},
}

\bib{Paterson1970}{article}{
  author={Paterson, Michael S.},
  title={Unsolvability in $3\times 3$ matrices},
  journal={Studies in Appl. Math.},
  volume={49},
  date={1970},
  pages={105--107},
  review={\MR {0255400 (41 \#62)}},
}

\bib{Pheidas1988}{article}{
  author={Pheidas, Thanases},
  title={Hilbert's tenth problem for a class of rings of algebraic integers},
  journal={Proc. Amer. Math. Soc.},
  volume={104},
  date={1988},
  number={2},
  pages={611--620},
  issn={0002-9939},
  review={\MR {962837 (90b:12002)}},
  doi={10.2307/2047021},
}

\bib{Poonen2002-h10-over-Ok}{article}{
  author={Poonen, Bjorn},
  title={Using elliptic curves of rank one towards the undecidability of Hilbert's tenth problem over rings of algebraic integers},
  booktitle={Algorithmic number theory (Sydney, 2002)},
  series={Lecture Notes in Comput. Sci.},
  volume={2369},
  pages={33\ndash 42},
  publisher={Springer},
  place={Berlin},
  date={2002},
  review={\MR {2041072 (2004m:11206)}},
}

\bib{Poonen2008-undecidability}{article}{
  author={Poonen, Bjorn},
  title={Undecidability in number theory},
  journal={Notices Amer. Math. Soc.},
  volume={55},
  date={2008},
  number={3},
  pages={344--350},
  issn={0002-9920},
  review={\MR {2382821}},
}

\bib{Poonen2009-ae}{article}{
  author={Poonen, Bjorn},
  title={Characterizing integers among rational numbers with a universal-existential formula},
  journal={Amer. J. Math.},
  volume={131},
  date={2009},
  number={3},
  pages={675--682},
  issn={0002-9327},
  review={\MR {2530851 (2010h:11203)}},
  doi={10.1353/ajm.0.0057},
}

\bib{Poonen2011-automorphism}{article}{
  author={Poonen, Bjorn},
  title={Automorphisms mapping a point into a subvariety},
  note={With an appendix by Matthias Aschenbrenner},
  journal={J. Algebraic Geom.},
  volume={20},
  date={2011},
  number={4},
  pages={785--794},
  issn={1056-3911},
  review={\MR {2819676}},
  doi={10.1090/S1056-3911-2011-00543-2},
}

\bib{Post1944}{article}{
  author={Post, Emil L.},
  title={Recursively enumerable sets of positive integers and their decision problems},
  journal={Bull. Amer. Math. Soc.},
  volume={50},
  date={1944},
  pages={284--316},
  issn={0002-9904},
  review={\MR {0010514 (6,29f)}},
}

\bib{Post1946}{article}{
  author={Post, Emil L.},
  title={A variant of a recursively unsolvable problem},
  journal={Bull. Amer. Math. Soc.},
  volume={52},
  date={1946},
  pages={264--268},
  issn={0002-9904},
  review={\MR {0015343 (7,405b)}},
}

\bib{Post1947}{article}{
  author={Post, Emil L.},
  title={Recursive unsolvability of a problem of Thue},
  journal={J. Symbolic Logic},
  volume={12},
  date={1947},
  pages={1--11},
  issn={0022-4812},
  review={\MR {0020527 (8,558b)}},
}

\bib{Pour-El-Richards1979}{article}{
  author={Pour-El, Marian Boykan},
  author={Richards, Ian},
  title={A computable ordinary differential equation which possesses no computable solution},
  journal={Ann. Math. Logic},
  volume={17},
  date={1979},
  number={1-2},
  pages={61--90},
  issn={0003-4843},
  review={\MR {552416 (81k:03064)}},
  doi={10.1016/0003-4843(79)90021-4},
}

\bib{Pour-El-Richards1983}{article}{
  author={Pour-El, Marian Boykan},
  author={Richards, Ian},
  title={Noncomputability in analysis and physics: a complete determination of the class of noncomputable linear operators},
  journal={Adv. in Math.},
  volume={48},
  date={1983},
  number={1},
  pages={44--74},
  issn={0001-8708},
  review={\MR {697614 (84j:03114)}},
  doi={10.1016/0001-8708(83)90004-X},
}

\bib{Presburger1929}{article}{
  author={Presburger, Moj{\.z}esz},
  title={\"Uber die Vollst\"andigkeit eines gewissen Systems der Arithmetik ganzer Zahlen, in welchem die Addition als einzige Operation hervortritt},
  language={German},
  book={ title={Comptes Rendus du I congr\`es de Math\'ematiciens des Pays Slaves}, place={Warsaw}, pages={92--101}, },
  date={1929},
  translation={ title={On the completeness of a certain system of arithmetic of whole numbers in which addition occurs as the only operation}, note={Translated from the German and with commentaries by Dale Jacquette}, journal={Hist. Philos. Logic}, volume={12}, date={1991}, number={2}, pages={225--233}, issn={0144-5340}, review={\MR {1111343 (92i:03003)}}, doi={10.1080/014453409108837187}, },
}

\bib{Rabin1957}{article}{
  author={Rabin, Michael O.},
  title={Effective computability of winning strategies},
  conference={ title={Contributions to the theory of games, vol. 3}, },
  book={ series={Annals of Mathematics Studies, no. 39}, publisher={Princeton University Press}, place={Princeton, N. J.}, },
  date={1957},
  pages={147--157},
  review={\MR {0093740 (20 \#263)}},
}

\bib{Rabin1958}{article}{
  author={Rabin, Michael O.},
  title={Recursive unsolvability of group theoretic problems},
  journal={Ann. of Math. (2)},
  volume={67},
  date={1958},
  pages={172--194},
  issn={0003-486X},
  review={\MR {0110743 (22 \#1611)}},
}

\bib{Rhoads2005}{article}{
  author={Rhoads, Glenn C.},
  title={Planar tilings by polyominoes, polyhexes, and polyiamonds},
  journal={J. Comput. Appl. Math.},
  volume={174},
  date={2005},
  number={2},
  pages={329--353},
  issn={0377-0427},
  review={\MR {2106443 (2005h:05042)}},
  doi={10.1016/j.cam.2004.05.002},
}

\bib{Richardson1968}{article}{
  author={Richardson, Daniel},
  title={Some undecidable problems involving elementary functions of a real variable},
  journal={J. Symbolic Logic},
  volume={33},
  date={1968},
  pages={514--520},
  issn={0022-4812},
  review={\MR {0239976 (39 \#1330)}},
}

\bib{Risch1970}{article}{
  author={Risch, Robert H.},
  title={The solution of the problem of integration in finite terms},
  journal={Bull. Amer. Math. Soc.},
  volume={76},
  date={1970},
  pages={605--608},
  issn={0002-9904},
  review={\MR {0269635 (42 \#4530)}},
}

\bib{Robinson1949}{article}{
  author={Robinson, Julia},
  title={Definability and decision problems in arithmetic},
  journal={J. Symbolic Logic},
  volume={14},
  date={1949},
  pages={98--114},
  issn={0022-4812},
  review={\MR {0031446 (11,151f)}},
}

\bib{Robinson1971}{article}{
  author={Robinson, Raphael M.},
  title={Undecidability and nonperiodicity for tilings of the plane},
  journal={Invent. Math.},
  volume={12},
  date={1971},
  pages={177--209},
  issn={0020-9910},
  review={\MR {0297572 (45 \#6626)}},
}

\bib{Robinson1978}{article}{
  author={Robinson, Raphael M.},
  title={Undecidable tiling problems in the hyperbolic plane},
  journal={Invent. Math.},
  volume={44},
  date={1978},
  number={3},
  pages={259--264},
  issn={0020-9910},
  review={\MR {484409 (81h:03091)}},
  doi={10.1007/BF01403163},
}

\bib{Rosenlicht1972}{article}{
  author={Rosenlicht, Maxwell},
  title={Integration in finite terms},
  journal={Amer. Math. Monthly},
  volume={79},
  date={1972},
  pages={963--972},
  issn={0002-9890},
  review={\MR {0321914 (48 \#279)}},
}

\bib{Rosser1936}{article}{
  author={Rosser, Barkley},
  title={Extensions of some theorems of G\"odel and Church},
  journal={J. Symbolic Logic},
  volume={1},
  pages={87--91},
  date={1936},
  doi={10.2307/2269028},
}

\bib{Rubel1981}{article}{
  author={Rubel, Lee A.},
  title={A universal differential equation},
  journal={Bull. Amer. Math. Soc. (N.S.)},
  volume={4},
  date={1981},
  number={3},
  pages={345--349},
  issn={0273-0979},
  review={\MR {609048 (82e:34015)}},
  doi={10.1090/S0273-0979-1981-14910-7},
}

\bib{Rubel1983}{article}{
  author={Rubel, Lee A.},
  title={Some research problems about algebraic differential equations},
  journal={Trans. Amer. Math. Soc.},
  volume={280},
  date={1983},
  number={1},
  pages={43--52},
  issn={0002-9947},
  review={\MR {712248 (84j:34005)}},
  doi={10.2307/1999601},
}

\bib{Rubel1992}{article}{
  author={Rubel, Lee A.},
  title={Some research problems about algebraic differential equations. II},
  journal={Illinois J. Math.},
  volume={36},
  date={1992},
  number={4},
  pages={659--680},
  issn={0019-2082},
  review={\MR {1215800 (94c:34003)}},
}

\bib{Seidel2008}{article}{
  author={Seidel, Paul},
  title={A biased view of symplectic cohomology},
  conference={ title={Current developments in mathematics, 2006}, },
  book={ publisher={Int. Press, Somerville, MA}, },
  date={2008},
  pages={211--253},
  review={\MR {2459307 (2010k:53153)}},
}

\bib{Shlapentokh1989}{article}{
  author={Shlapentokh, Alexandra},
  title={Extension of Hilbert's tenth problem to some algebraic number fields},
  journal={Comm. Pure Appl. Math.},
  volume={42},
  date={1989},
  number={7},
  pages={939--962},
  issn={0010-3640},
  review={\MR {1008797 (91g:11155)}},
  doi={10.1002/cpa.3160420703},
}

\bib{Shlapentokh2007book}{book}{
  author={Shlapentokh, Alexandra},
  title={Hilbert's tenth problem. Diophantine classes and extensions to global fields},
  series={New Mathematical Monographs},
  volume={7},
  publisher={Cambridge University Press},
  place={Cambridge},
  date={2007},
  pages={xiv+320},
  isbn={978-0-521-83360-8},
  isbn={0-521-83360-4},
  review={\MR {2297245}},
}

\bib{Shlapentokh2008}{article}{
  author={Shlapentokh, Alexandra},
  title={Elliptic curves retaining their rank in finite extensions and Hilbert's tenth problem for rings of algebraic numbers},
  journal={Trans. Amer. Math. Soc.},
  volume={360},
  date={2008},
  number={7},
  pages={3541--3555},
  issn={0002-9947},
  review={\MR {2386235 (2010e:11116)}},
  doi={10.1090/S0002-9947-08-04302-X},
}

\bib{Siegelmann-Sontag1995}{article}{
  author={Siegelmann, Hava T.},
  author={Sontag, Eduardo D.},
  title={On the computational power of neural nets},
  journal={J. Comput. System Sci.},
  volume={50},
  date={1995},
  number={1},
  pages={132--150},
  issn={0022-0000},
  review={\MR {1322637 (97b:68054)}},
  doi={10.1006/jcss.1995.1013},
}

\bib{Skolem1934}{article}{
  author={Skolem, Th.},
  title={Ein Verfahren zur Behandlung gewisser exponentialer Gleichungen und diophantischer Gleichungen},
  language={German},
  booktitle={8. Skand. Mat.-Kongr., Stockholm},
  date={1934},
  pages={163--188},
}

\bib{Smale1961}{article}{
  author={Smale, Stephen},
  title={Generalized Poincar\'e's conjecture in dimensions greater than four},
  journal={Ann. of Math. (2)},
  volume={74},
  date={1961},
  pages={391--406},
  issn={0003-486X},
  review={\MR {0137124 (25 \#580)}},
}

\bib{Soare1996}{article}{
  author={Soare, Robert I.},
  title={Computability and recursion},
  journal={Bull. Symbolic Logic},
  volume={2},
  date={1996},
  number={3},
  pages={284--321},
  issn={1079-8986},
  review={\MR {1416870 (97j:03077)}},
  doi={10.2307/420992},
}

\bib{Soare2004}{article}{
  author={Soare, Robert I.},
  title={Computability theory and differential geometry},
  journal={Bull. Symbolic Logic},
  volume={10},
  date={2004},
  number={4},
  pages={457--486},
  issn={1079-8986},
  review={\MR {2136634 (2005m:03084)}},
  doi={10.2178/bsl/1102083758},
}

\bib{Stallworth-Roush1997}{article}{
  author={Stallworth, Daniel T.},
  author={Roush, Fred W.},
  title={An undecidable property of definite integrals},
  journal={Proc. Amer. Math. Soc.},
  volume={125},
  date={1997},
  number={7},
  pages={2147--2148},
  issn={0002-9939},
  review={\MR {1377008 (97i:03006)}},
  doi={10.1090/S0002-9939-97-03822-7},
}

\bib{Stanley2010mo}{misc}{
  title={Decidability of chess on an infinite board},
  author={Stanley, Richard},
  date={2010-07-20},
  note={\url {http://mathoverflow.net/questions/27967}},
  eprint={http://mathoverflow.net/questions/27967},
  organization={MathOverflow},
}

\bib{Tarski1951}{book}{
  author={Tarski, Alfred},
  title={A decision method for elementary algebra and geometry},
  note={2nd ed},
  publisher={University of California Press},
  place={Berkeley and Los Angeles, Calif.},
  date={1951},
  pages={iii+63},
  review={\MR {0044472 (13,423a)}},
}

\bib{Turing1936}{article}{
  author={Turing, A. M.},
  title={On computable numbers, with an application to the Entscheidungsproblem},
  journal={Proc. London Math. Soc. (2)},
  volume={42},
  date={1936--1937},
  pages={230--265},
  note={Erratum in: {\em Proc. London Math. Soc. (2)}, \textbf {43} (1937), 544--546},
}

\bib{Volodin-Kuznecov-Fomenko1974}{article}{
  author={Volodin, I. A.},
  author={Kuznecov, V. E.},
  author={Fomenko, A. T.},
  title={The problem of the algorithmic discrimination of the standard three-dimensional sphere},
  language={Russian},
  note={Appendix by S. P. Novikov},
  journal={Uspehi Mat. Nauk},
  volume={29},
  date={1974},
  number={5(179)},
  pages={71--168},
  issn={0042-1316},
  review={\MR {0405426 (53 \#9219)}},
}

\bib{H10web}{misc}{
  title={Hilbert's tenth problem page},
  editor={Vserminov, Maxim},
  note={Website created under the supervision of Yuri Matiyasevich, \url {http://logic.pdmi.ras.ru/Hilbert10}\phantom {i}},
}

\bib{Wang1961}{article}{
  author={Wang, Hao},
  title={Proving theorems by pattern recognition---II},
  journal={Bell System Tech.\ J.},
  volume={40},
  number={1},
  pages={1--41},
  date={1961-01},
}

\bib{Wang1974}{article}{
  author={Wang, Paul S.},
  title={The undecidability of the existence of zeros of real elementary functions},
  journal={J. Assoc. Comput. Mach.},
  volume={21},
  date={1974},
  pages={586--589},
  issn={0004-5411},
  review={\MR {0363862 (51 \#117)}},
}

\bib{Weinberger2005}{book}{
  author={Weinberger, Shmuel},
  title={Computers, rigidity, and moduli},
  series={M. B. Porter Lectures},
  note={The large-scale fractal geometry of Riemannian moduli space},
  publisher={Princeton University Press},
  place={Princeton, NJ},
  date={2005},
  pages={xii+174},
  isbn={0-691-11889-2},
  review={\MR {2109177 (2006f:53059)}},
}

\bib{Zermelo1913}{article}{
  author={Zermelo, Ernst},
  title={\"Uber eine Anwendung der Mengenlehre auf die Theorie des Schachspiels},
  language={German},
  booktitle={Proc.\ Fifth Congress Mathematicians (Cambridge 1912)},
  publisher={Cambridge University Press},
  date={1913},
  pages={501--504},
  translation={ author={Schwalbe, Ulrich}, author={Walker, Paul}, title={Zermelo and the early history of game theory}, journal={Games Econom. Behav.}, volume={34}, date={2001}, number={1}, pages={123--137}, issn={0899-8256}, review={\MR {1895174 (2003a:01019)}}, doi={10.1006/game.2000.0794}, },
}

\end{biblist}
\end{bibdiv}

\end{document}